\definecolor{hot}{RGB}{65,105,225}
\newtheorem{theorem}{Theorem}[section]
\newtheorem{lemma}[theorem]{Lemma}
\newtheorem{theorem-definition}[theorem]{Theorem-Definition}
\newtheorem{corollary}[theorem]{Corollary}
\newtheorem{proposition}[theorem]{Proposition}
\newtheorem{prop}[theorem]{Proposition}
\newtheorem{ques}[theorem]{Question}
\newtheorem{definition-theorem}[theorem]{Definition-Theorem}
\newtheorem{theorem-defintion}[theorem]{Theorem-Definition}
\newtheorem{corollary-definition}[theorem]{Corollary-Definition}
\newtheorem{definition-proposition}[theorem]{Definition-Proposition}
\theoremstyle{definition}
\newtheorem{definition}[theorem]{Definition}
\newtheorem{remark}[theorem]{Remark}
\newtheorem{rmk}[theorem]{Remark}
\numberwithin{equation}{section}
\numberwithin{equation}{section}
\def\bC{\mathbb{C}}
\def\be{\begin{equation}}
	\def\ee{\end{equation}}
\def\bZ{\mathbb{Z}}
\def\al{\alpha}
\def\bQ{\mathbb{Q}}
\def\ord{\text{ord}}
\def\Spec{\text{Spec}\,}
\def\cL{\mathcal L}
\def\Supp{\mathrm{Supp}}
\def\Ex{{\mathrm{Ex}}}
\def\bL{\mathbb{L}}
\def\bA{\mathbb{A}}
\def\cD{\mathcal{D}}
\def\sX{\mathscr{X}}
\def\sL{\mathscr{L}}
\def\ol{\overline}
\def\sY{\mathscr{Y}}
\def\bP{\mathbb{P}}
\def\Homo{\mathrm{Hom}}
\def\Var{\text{Var}}
\def\Bir{\text{Bir}}
\def\Sk{\text{Sk}}
\def\cF{\mathcal{F}}
\def\lct{\mathrm{lct}}
\author{Tom Biesbrouck}
\address{Department of Mathematics, KU Leuven, Celestijnenlaan 200B, 3001 Leuven, Belgium.}
\email{tom.biesbrouck@kuleuven.be}
\author{Nero Budur}
\address{Department of Mathematics, KU Leuven, Celestijnenlaan 200B, 3001 Leuven, Belgium;  YMSC, Tsinghua University, 100084 Beijing, China;  BCAM, Mazarredo 14, 48009 Bilbao, Spain.}
\email{nero.budur@kuleuven.be}
\author{Johannes Nicaise}
\address{Department of Mathematics, KU Leuven, Celestijnenlaan 200B, 3001 Leuven, Belgium.}
\email{johannes.nicaise@kuleuven.be}
\author{Willem Veys}
\address{Department of Mathematics, KU Leuven, Celestijnenlaan 200B, 3001 Leuven, Belgium.}
\email{wim.veys@kuleuven.be}
\title{Birational zeta functions}
\begin{document}
\begin{abstract}
 We define a birational analog of the motivic zeta function of a reduced polynomial in terms of minimal models. It admits an intrinsic meaning in terms of contact loci of arcs, an analog of a result of Denef and Loeser in the motivic case. We show that  for local plane curve singularities the poles of the birational zeta function essentially coincide with the poles of the motivic zeta function.
  \end{abstract}

\maketitle

\section{Introduction}

\subs{\bf Motivation.}\label{subsMot} Let $f\in\bC[x_1,\ldots,x_n]$ be a non-constant polynomial. Associated with $f$ one has the {\it contact loci} $\sX_m(f)$, consisting of $m$-jets with order precisely $m$ along $f$, for $m\in\bZ_{>0}$.
The  {\it motivic zeta function} of $f$ is 
$$
Z^{mot}_{f}(T)\vcentcolon= \sum_{m\ge 1}[\sX_m(f)]\bL^{-mn}T^m\quad \in K_0(\Var_\bC)[\bL^{-1}]\llbracket T \rrbracket,
$$
a normalized generating series for the classes in the Grothendieck ring of complex varieties of the contact loci, where $\bL=[\bA^1]$. Denef and Loeser \cite{DL} showed that it can be expressed as a rational function in terms of any log resolution $\mu:Y\to \bA^n$ of $f$:
$$
Z^{mot}_{f}(T)=  \sum_{\emptyset\neq I\subset S}[E_I^\circ]\prod_{i\in I}\frac{\bL-1}{\bL^{\nu_i}T^{-N_i}-1},
$$
where $(f\circ\mu)^{-1}(0)=\sum_{i\in S}N_i E_i$ is a simple normal crossings divisor with irreducible components indexed by $S$, $N_i$ is the order of vanishing of $f$ along $E_i$, $\nu_i-1$ is the order of vanishing of the determinant of the Jacobian of $\mu$ along $E_i$, $E_I=\cap_{i\in S}E_i$ and 
$E_I^\circ=E_I\setminus \cup_{j\in S\setminus I}E_j$. Typically only a few of the denominators $\bL^{\nu_i}T^{-N_i}-1$ survive after cancellations, leading to a formal definition of the notion of pole of $Z^{mot}_f(T)$, see  \ref{defPoles}.


The poles are largely mysterious and form the subject of the monodromy conjecture of Igusa, Denef, and Loeser. In practice, the smaller the resolution is, the smaller the set of pole candidates is, and there is a better chance of understanding the poles. In a few cases, such as plane curves, hyperplane arrangements, and others, a minimal log resolution exists, but in general it does not. 

In higher-dimensional algebraic geometry, the role of minimal log resolutions is played by  minimal models. Setting $\Delta=\sum_{i\in S}E_i$, one runs the minimal model program for $(Y,\Delta)$ over $X=\bA^n$ and achieves a minimal model $(Y',\Delta')$, by Odaka and Xu \cite{OX}. This is only a partial resolution since $Y'$ can have  singularities, but the singularities are mild since $(Y',\Delta')$ is a divisorially log terminal (dlt) pair, see \ref{subNot}. This is in fact a dlt modification of $f$, assuming that $f$ is reduced, see \ref{subMain}.

Xu \cite{Xu} defined a motivic zeta function in terms of dlt modifications of $f$ and raised the question of an intrinsic interpretation for it. However, it turned out that his zeta function 
 depends on the choice of dlt modification if $n\ge 3$, by Nicaise, Potemans, and Veys \cite{NPV}, and hence it cannot have an intrinsic meaning.

We define a birational analog of the motivic zeta function of a reduced polynomial $f$ in terms of a dlt modification of $f$. We show that it is independent of the choice of dlt modification and that it admits an intrinsic meaning in terms of contact loci of arcs. 

\subs{\bf Birational classes.} Let $\Bir^d_\bC$ be the set of birational equivalence classes of complex varieties, always meaning reduced and irreducible in this article, of dimension $d$. Let $\bZ[\Bir^d_\bC]$ be the free abelian group generated by $\Bir^d_\bC$. The group $$\bZ[\Bir_\bC]\vcentcolon=\bigoplus_d \bZ[\Bir^d_\bC]$$ is endowed with a natural graded ring structure, see \cite{KT, NO}. For a complex variety $Z$ of dimension $d$, we denote by $\{Z\}\in\Bir^d_\bC$ its birational equivalence class. If $Z$ is not necessarily irreducible, we denote by $\{Z\}$ the sum of the birational equivalence classes in their respective dimensions of the irreducible components of $Z$. Let $\cL=\{\bA^1\}\in\Bir^1_\bC$.

\subs{\bf Main result.}\label{subMain}  Let $X$ be a smooth complex algebraic variety of dimension $n$ and $D$ a non-zero reduced divisor on $X$. 
Let $\sL_m(X)$
 be the space of $m$-jets on $X$, that is, the space of morphisms of $\bC$-schemes $\Spec\bC[t]/(t^{m+1})\to X$. For $m\ge 1$, define the {\it $m$-contact locus of $(X,D)$} as
$$
\sX_m=\sX_m(X,D)\vcentcolon=\{\gamma\in \sL_m(X)\mid \ord_\gamma(D)=m\}.
$$
The Embedded Nash Problem, asking for a geometric characterization of the irreducible components of $\sX_m$, is still open. A partial answer, which we will use, was given by  \cite{B+} as follows, see also \ref{subMsep}.

 An {\it $m$-valuation of $(X,D)$} is a divisorial valuation on the function field of $X$ given by the order of vanishing along some prime divisor $E$ on some  birational  modification $Y\to X$ of $X$, such that the image of $E$ on $X$ is included in $D$, and for which $\ord_E(D)$ divides $m$.  An {\it essential $m$-valuation of $(X,D)$} is an $m$-valuation with center a prime divisor on any $m$-separating log resolution of $(X,D)$,  see \ref{subMsep}. The irreducible components of $\sX_m$ are in one-to-one correspondence with a subset of the {\it essential $m$-valuations of $(X,D)$}, and the Embedded Nash Problem asks to identify this subset.
 A {\it dlt $m$-valuation of $(X,D)$} is an $m$-valuation with center a prime divisor on some projective dlt modification of $(X,D)$. Typically, on a given dlt modification, the centra of 
 most   dlt $m$-valuations for  $m\gg 0$ are not divisors.

 By \cite{B+}, the dlt $m$-valuations of $(X,D)$ give rise to mutually distinct irreducible components of $\sX_m$, if $X$ is in addition quasi-projective. The dlt $m$-valuations form a subset of the essential $m$-valuations.
Denote by $\sX_m^{dlt}$ the union of the irreducible components of $\sX_m$ produced this way. 
Define the {\it birational zeta function of $(X,D)$} as
$$
Z^{\, bir}_{X,D}(T) \vcentcolon= \sum_{m\ge 1}\{\sX_m^{dlt}\}\cL^{-mn}T^m\quad\in\bZ[\Bir_\bC][\cL^{-1}]\llbracket T \rrbracket.
$$

\begin{theorem}\label{thmMain} Let $X$ be a smooth quasi-projective complex algebraic variety and $D$ a non-zero reduced divisor on $X$. Let $\mu:(Y,\Delta)\to (X,D)$ be a  dlt modification of $(X,D)$. Then
$$
Z^{\, bir}_{X,D}(T)=\sum_{\emptyset\neq I\subset S}\{E_I\}\prod_{i\in I}\frac{\cL}{\cL^{\nu_i}T^{-N_i}-1},
$$
where $\Delta=\sum_{i\in S}E_i$ is the irreducible decomposition of the reduced pullback of $D$, $N_i=\ord_{E_i}(D)$,   $K_{Y/X}=\sum_{i\in S}(\nu_i-1)E_i$, and $E_I$ for $I\subset S$ is as above.
\end{theorem}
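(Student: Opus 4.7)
The plan is to enumerate the dlt $m$-valuations of $(X,D)$ via the dlt modification $(Y,\Delta)$, compute the birational class of each associated irreducible component of $\sX_m^{dlt}$, and collect these contributions. By \cite{B+}, we have $\{\sX_m^{dlt}\}=\sum_v\{C_v\}$, where $v$ runs over the dlt $m$-valuations of $(X,D)$ and $C_v\subseteq\sX_m$ is the irreducible component associated to $v$; so the theorem reduces to carrying out both tasks on $(Y,\Delta)$.

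First I would parametrize the components $C_v$ by tuples $(I,(a_i)_{i\in I})$ with $\emptyset\neq I\subset S$, $a_i\in\bZ_{\geq 1}$, and $\sum_{i\in I}a_iN_i=m$. The tuple records the vanishing profile of a generic arc in $C_v$ along each $E_i$, and the underlying dlt $m$-valuation is the primitive version $v=\tfrac{1}{\gcd(a_i)}\sum_{i\in I}a_i\ord_{E_i}$, realized as a prime divisor on a toroidal blow-up of the stratum $E_I$ in local snc coordinates on $(Y,\Delta)$. The crucial input here is that the non-snc locus $Y\setminus U$ of the dlt pair $(Y,\Delta)$ has codimension at least three in $Y$, so that in codimension one every projective dlt modification of $(X,D)$ dominating $(Y,\Delta)$ is controlled by toroidal combinatorics on $U$, and hence there are no extra dlt $m$-valuations arising over the non-snc locus.

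Next, for each such tuple I would compute $\{C_{(I,(a_i))}\}$ by local analysis on $U$. In local snc coordinates $x_1,\ldots,x_n$ at a point of $E_I^\circ$ with $E_i=\{x_i=0\}$ for $i\in I$, an $m$-jet on $Y$ projecting into $C_{(I,(a_i))}$ takes the form $x_i(t)=c_it^{a_i}+O(t^{a_i+1})$ with $c_i\in\bC^*$ for $i\in I$, while the remaining Taylor coefficients of order $\leq m$ are free. Thus the lift $\widetilde C\subseteq\sL_m(Y)$ of this component is birational to $E_I\times(\bC^*)^{|I|}\times\bA^{mn-\sum_{i\in I}a_i}$; pushing forward to $\sL_m(X)$ by $\mu$ divides the class by $\cL^{\sum_{i\in I}(\nu_i-1)a_i}$, the Jacobian contribution from $K_{Y/X}=\sum_{i\in S}(\nu_i-1)E_i$, and using $\{E_I^\circ\}=\{E_I\}$ together with $\{\bC^*\}=\cL$ in $\bZ[\Bir_\bC]$ gives
\[
\{C_{(I,(a_i))}\}=\{E_I\}\,\cL^{|I|+mn-\sum_{i\in I}\nu_ia_i}.
\]

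Assembling these contributions over all tuples and using the identity $\cL\sum_{a\geq 1}\cL^{-\nu a}T^{Na}=\cL/(\cL^\nu T^{-N}-1)$ produces the closed form of the theorem. I expect the main obstacle to be the parametrization step: one must both rule out dlt $m$-valuations arising over the non-snc locus of $(Y,\Delta)$ (via the codimension-three estimate) and check that the toroidal blow-ups of $U$ realizing the primitive monomial valuations can be extended to projective dlt modifications of $(X,D)$, while the local birational class count in the third paragraph is a routine snc-coordinate exercise.
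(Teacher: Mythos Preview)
Your strategy is genuinely different from the paper's, and it has a real gap in the parametrization step.

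The paper does \emph{not} try to enumerate all dlt $m$-valuations directly on the fixed dlt modification $(Y,\Delta)$. Instead it first proves (Proposition~\ref{propInv}, via weak factorization and an explicit blow-up identity) that the right-hand side depends only on the crepant-birational class of the dlt resolution. Then, for each $m$, it replaces $(Y,\Delta)$ by the minimal model $(Y',\Delta')$ of an $m$-\emph{separating} log resolution $\tilde\mu$, and compares the two sides modulo $T^{m+1}$. On the left, Theorem~\ref{thmB} identifies the dlt $m$-valuations with those $E_i$ on $\tilde\mu$ surviving on $(Y',\Delta')$, and Lemma~\ref{lemXmi} gives $\{\sX_{m,i}\}\cL^{-mn}=\{E_i\}\cL^{1-\nu_im/N_i}$. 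On the right, the key observation is that modulo $T^{m+1}$ only the $|I|=1$ terms of $Z^{bir}_{X,D,\mu'}(T)$ contribute: if $E_i'\cap E_j'\neq\emptyset$ on $Y'$ then, by the collapse property of dual complexes \cite[Theorem 28(3)]{dFKX}, already $E_i\cap E_j\neq\emptyset$ on the $m$-separating resolution, forcing $N_i+N_j>m$. So both sides reduce to the same sum over single divisors.

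Your approach instead asserts that the dlt $m$-valuations are exactly the toroidal valuations $v_{(I,(a_i))}$ on the snc locus $U$ of $(Y,\Delta)$. This needs two things: (a) every dlt $m$-valuation is toroidal on $U$; (b) every such toroidal valuation is realized as a prime divisor on some projective dlt modification. Your justification for (a), that the non-snc locus has codimension $\ge 3$, is false already for surfaces (a dlt modification can have Du Val singularities, which are codimension~$2$). The correct argument is via discrepancies: any two dlt modifications are crepant-birational, so a dlt valuation has $a(v,Y,\Delta)=-1$; the dlt condition then forces its center into the snc locus, where the divisorial valuations of discrepancy $-1$ are precisely the toroidal ones. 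For (b), which you correctly flag as the main obstacle, you must show that the weighted blow-up of the stratum $E_I$ extends from $U$ to a global projective dlt modification of $(X,D)$ on which $v$ survives as a divisor; this is not automatic (the stratum need not lie entirely in $U$, and projectivity over $X$ must be arranged), and it is exactly the content that the paper's detour through $m$-separating resolutions and \cite[Theorem 1.13]{B+} supplies. Your local computation of $\{C_{(I,(a_i))}\}$ is fine once (a) and (b) are in hand, but as written the proposal defers the substantive part of the proof.
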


In particular, the right-hand side is independent of the chosen dlt modification, which is not assumed to be projective. Theorem \ref{thmMain} is a birational analog of the formula of Denef and Loeser.

In the body of the paper we prove a more general statement than Theorem \ref{thmMain}, regarding a local birational zeta function that deals with arcs centered on a fixed closed subset $\Sigma$ of  $D$, see Theorem \ref{thmMain2}.

Natural questions are if the analog of the monodromy conjecture holds for the poles of the birational zeta function, and, moreover, if the poles of the birational zeta function are poles of the motivic zeta function, see \ref{defPoles} for the definition of poles.
 
\begin{ques}\label{conj:birmon}(Birational Monodromy Conjecture)
	Let \(X\) be a smooth quasi-projective complex algebraic variety and \(D\)  a non-zero reduced divisor on \(X\). If $s_0\in\bQ$ is a pole of \(Z_{X,D}^{bir}(T)\),  is then  \(\exp(2\pi is_0)\)  a monodromy eigenvalue of \(D\) at some point of the support of $D$?
\end{ques}
\begin{ques}\label{conj:birmotpoles}
	Let \(X\) be a smooth quasi-projective complex algebraic variety and \(D\)  a non-zero reduced divisor on \(X\). If $s_0\in\bQ$ is a pole of \(Z_{X,D}^{bir}(T)\), is it then a pole of \(Z_{X,D}^{mot}(T)\)?
\end{ques}

For plane curves, we show that a local version of Question \ref{conj:birmotpoles} is true. Moreover, in this local case, the set of poles of the topological zeta function, a specialization of the motivic zeta function, is equal to the set of poles of the birational zeta function, see Corollary \ref{cor:topbirpoles}. Combining this with \cite{Lo}, it follows that the local version of Question \ref{conj:birmon} is true for plane curves.

Any two dlt modifications of $(X,D)$ are crepant-birationally equivalent. One can use the formula on the right-hand side of the equality in Theorem \ref{thmMain} to define a birational zeta function $Z^{bir}_{X,D,\mu}(T)$ for any dlt resolution $\mu$ of $(X,D)$. A dlt resolution is also a partial log resolution with mild singularities, see \ref{subNot}. The difference with dlt modifications is that a dlt resolution does not have to be a minimal model over $X$. We show more generally  that $Z^{bir}_{X,D,\mu}(T)$ depends only on the crepant-birational equivalence class of $\mu$, see Proposition \ref{propInv}.  In general, this is different from $Z^{bir}_{X,D}(T)$ if $\mu$ is not a dlt modification.


\subs{\bf Outline.}
In Section \ref{secProofs}, we prove all the above results, deferring the results on the poles of the birational zeta function for plane curves to Section \ref{secCurv}. 

In Section \ref{secPoles}, we show that the monodromy conjecture cannot hold for birational zeta functions of arbitrary dlt resolutions. We also show that the analog of the former conjecture of Veys on the poles of maximal possible order of the topological zeta function, proven by Nicaise-Xu \cite[Theorem 3.5 (2)]{NX}, holds  for the rational zeta function. The topological zeta function is a certain specialization of the motivic zeta function. The rational zeta function is the specialization of the birational zeta function obtained by sending the birational equivalence class $\{Z\}$ of a variety to $\cL^{\dim Z}$.

In Section \ref{secEx}, we compute birational zeta functions for some concrete examples of pairs \((X,D)\).

\smallskip
\noindent{\bf Acknowledgements.} 
		T. Biesbrouck was supported by the grant 11P4V24N from FWO. N. Budur was supported by a Methusalem grant and the grant G0B3123N from FWO. J. Nicaise was supported by a Methusalem grant and the grant G0B1721N from FWO.
W. Veys was supported by the KU Leuven Grant GYN-E4282-C16/23/010.
We thank Q. Shi for comments.

\section{Proof of the main result}\label{secProofs}

\subs{\bf Notation and terminology.}\label{subNot}	
We work over $\bC$. A complex algebraic {\it variety} will mean an integral separated finite type $\bC$-scheme.  

A {\it pair} $(Y,\Delta)$ consists of a normal variety $Y$ and a $\bQ$-divisor $\Delta$ such that $K_Y+\Delta$ is $\bQ$-Cartier. We denote by $\Delta^{=1}$ the sum of the prime divisors that have coefficient equal to 1 in $\Delta$.  If all coefficients of $\Delta$ are in $[0,1]$, respectively are $\le 1$, we say $\Delta$ is a {\it boundary}, respectively {\it sub-boundary}. 

We say that $(Y,\Delta)$ is a {\it snc} pair, short for {\it simple normal crossings}, if $Y$ is smooth
and $\Delta$ has simple normal crossings support.

A {\it log resolution}  of a pair $(Y,\Delta)$ is a proper birational morphism $\mu:Y'\to Y$ such that $Y'$ is smooth, and $\mu^{-1}(\Delta)$, the exceptional locus $\Ex(\mu)$ of $\mu$, and their union, are all simple normal crossings divisors. Note that, if $Y$ is in addition $\bQ$-factorial, then $\Ex(\mu)$ is automatically of pure codimension one \cite[VI.1, 1.5]{Ko-rat}.

If $\mu:Y'\to Y$ is a birational morphism of  normal varieties and $(Y,\Delta)$ is a pair, the {\it log pull-back} of $\Delta$ is the $\bQ$-divisor $\Delta_{Y'}$ with $\mu_*(\Delta_{Y'})=\Delta$, defined by
$
K_{Y'}+\Delta_{Y'}\sim_{\bQ} \mu^*(K_Y+\Delta).
$
If $E\subset Y'$ is a prime divisor, the {\it  discrepancy} $a(E,Y,\Delta)$ is the negative of the coefficient of $E$ in $\Delta_{Y'}$.

We say that $(Y,\Delta)$ is a {\it dlt pair}, short for {\it divisorially log terminal}, if it is a pair, $\Delta$ is a boundary, and there exists a closed subset $Z\subset Y$ such that $(Y\setminus Z, \Delta|_{Y\setminus Z})$ is an snc pair and for a log resolution (equivalently, for all log resolutions) $\mu:Y'\to Y$ of $(Y,\Delta)$ with $\mu^{-1}(Z)$ of pure codimension one, the condition $a(E,Y,\Delta)>-1$ is satisfied for every prime divisor $E\subset \mu^{-1}(Z)$.

A {\it dlt modification} of a pair  $(Y,\Delta)$ is a dlt pair $(Y',\Delta')$, together with a proper birational morphism  $\mu:Y'\to Y$, such that $\Delta'=\mu_*^{-1}\Delta+\Ex^1(\mu)$, and $(Y',\Delta')$ is a minimal model over $Y$, that is, $K_{Y'}+\Delta'$ is $\mu$-nef, where by $\Ex^1(\mu)$ we mean the union of the codimension one components of the exceptional locus $\Ex(\mu)$, and by $\mu_*^{-1}(\_)$ we mean taking the strict transform of a divisor.
A {\it dlt resolution of $(Y,\Delta)$} is defined similarly  but without requiring   $K_{Y'}+\Delta'$  to be $\mu$-nef.

\subs{\bf Birational zeta functions of dlt resolutions.} 
Let $X$ be a smooth complex algebraic variety of dimension $n$, $D$ a non-zero reduced divisor on $X$, and $\Sigma$ a closed subset of the support of $D$.

\begin{definition}\label{defBir} (1)  The {\it birational zeta function of a dlt resolution $\mu:(Y,\Delta)\to (X,D)$} is 
$$
Z^{\, bir}_{X,D,\mu}(T)\vcentcolon=\sum_{\emptyset\neq I\subset S}\{E_I\}\prod_{i\in I}\frac{\cL}{\cL^{\nu_i}T^{-N_i}-1}\quad \in \bZ[\Bir_\bC][\cL^{-1}]\llbracket T \rrbracket,
$$
where  $\Supp(\Delta)=\cup_{i\in S}E_i$ is the irreducible decomposition, $\ord_{E_i}(D)=N_i$, $K_{Y/X}=\sum_{i\in S}(\nu_i-1)E_i$,   $E_I=\cap_{i\in I}E_i$ for $I\subset S$, and $\{E_I\}\in\bZ[\Bir_\bC]$ is the sum of the birational classes of the irreducible components of $E_I$ in their respective dimensions.  

Note here that, since $X$ is smooth, $\Ex(\mu)$ has pure codimension 1. Moreover, if $\mu$ is a dlt modification, then $\Ex(\mu)\subset \Supp(\mu^*(D))$, equivalently $\Delta=(\mu^*(D))_{red}$, by the negativity lemma \cite[Lemma 1.17]{Ko}. So in this case $Z^{\, bir}_{X,D,\mu}(T)$ agrees with the right-hand side of the equality in Theorem \ref{thmMain}.

If $D$ is the scheme-theoretic zero locus of a regular function $f:X\to \bA^1$, we  denote $Z^{bir}_{X,D}(T)$ by $Z^{bir}_f(T)$.

(2)
A `local' version of the birational zeta function of $\mu$ above $\Sigma$ is 
$$
Z^{\, bir}_{X,D,\Sigma,\mu}(T)\vcentcolon=\sum_{
\substack{\emptyset\neq I\subset S, \\ I\cap S_\Sigma\neq \emptyset}}\{E_I\}\prod_{i\in I}\frac{\cL}{\cL^{\nu_i}T^{-N_i}-1}\quad \in \bZ[\Bir_\bC][\cL^{-1}]\llbracket T \rrbracket,
$$
where $S_\Sigma$ is the subset of indices $i\in S$ such that $\mu(E_i)\subset \Sigma$. When $\Sigma$ is the whole support of $D$, this is the previous definition.

If $D$ is the scheme-theoretic zero locus of a regular function $f:X\to \bA^1$, we  denote $Z^{bir}_{X,D,\Sigma}(T)$ by $Z^{bir}_{f,\Sigma}(T)$.
\end{definition}


For the proof of Proposition \ref{propInv} below we will need a more general definition. 

\begin{definition}\label{rmkDD}
With $X$ and $D$ as above, consider a proper birational morphism $\mu:Y\to X$ and a  $\bQ$-divisor $\Delta$ on $Y$, such that either $(Y,\Delta)$ is a dlt pair, or $\Delta$ is a sub-boundary and $(Y,\Delta)$ is an snc pair. Then the dual complex $\cD(\Delta^{=1})$ is a well-defined regular cell complex by \cite{dFKX}. 
Let $\cup_{i\in S}E_i$  be the irreducible decomposition of the support of $\Delta^{=1}$. 

(1) As in Definition \ref{defBir}(1), to these data we associate a birational zeta function, which we will simply denote by $Z^{bir}_{\cD(\Delta^{=1})}(T)$. 
We are suppressing from the notation that this also depends on  the birational equivalence classes of the strata corresponding to the faces of $\cD(\Delta^{=1})$, and on the valuations corresponding to its vertices. 

(2) Let $\Sigma$ be a  closed subset of the support of $D$. Let $S_\Sigma$ be the subset of indices $i\in S$ such that $\mu(E_i)\subset \Sigma$. Then as in Definition \ref{defBir}(2), to these data we associate a birational zeta function localized above $\Sigma$, which we will simply denote by $Z^{bir}_{\cD(\Delta^{=1}),\Sigma}(T)$.
\end{definition}

\begin{definition}
Two dlt resolutions $\mu_j:(Y_j,\Delta_j)\to (X,D)$, $j=1, 2$, are {\it crepant-birationally equivalent} if there exist proper birational morphisms $\pi_j:Y\to Y_j$ such that the log pull-backs of $\Delta_1$ and $\Delta_2$ are equal on $Y$.
\end{definition}

\begin{prop}\label{propInv} Let $X$ be a smooth complex algebraic variety, $D$ a non-zero reduced divisor on $X$, and $\Sigma$ a closed subset of the support of $D$. Then the birational zeta function $Z^{bir}_{X,D,\Sigma,\mu}(T)$ of a dlt resolution $\mu$ of $(X, D)$ only depends on the crepant-birational equivalence class of  $\mu$.
\end{prop}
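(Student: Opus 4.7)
\emph{Set-up and reduction to a common resolution.} Let $\pi_j\colon Y\to Y_j$ witness the crepant-birational equivalence. Replacing $Y$ by a further log resolution if needed (which remains crepant over both $Y_j$ with the same log pull-back), I may assume $Y$ is smooth and that the common log pull-back $\tilde\Delta:=\pi_j^*(K_{Y_j}+\Delta_j)-K_Y$ has simple normal crossings support on $Y$. Since the dlt pairs $(Y_j,\Delta_j)$ are log canonical, $\tilde\Delta$ is an snc sub-boundary, and Definition \ref{rmkDD}(2) attaches to the snc pair $(Y,\tilde\Delta)$ an intermediate birational zeta function $Z^{bir}_{\cD(\tilde\Delta^{=1}),\Sigma}(T)$. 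The components of $\tilde\Delta^{=1}$ are the strict transforms of the components of $\Delta_j$ together with the $\pi_j$-exceptional divisorial log canonical places of $(Y_j,\Delta_j)$. Since this intermediate expression does not depend on $j$, it suffices to establish the one-sided equality
$$Z^{bir}_{X,D,\Sigma,\mu_j}(T)=Z^{bir}_{\cD(\tilde\Delta^{=1}),\Sigma}(T)\qquad(j=1,2).$$

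\emph{Induction on extra log canonical places.} Fix $j$. I would prove the one-sided equality by induction on the number of $\pi_j$-exceptional divisorial log canonical places of $(Y_j,\Delta_j)$ appearing on $Y$; at each inductive step one such place is extracted by a smooth blow-up $\rho\colon W'\to W$ of a stratum of the current model. By the discrepancy formula, the new exceptional divisor $E_0$ has coefficient $|I|-\codim(Z)+1$ in the log pull-back, where $I$ indexes the components of the current $\tilde\Delta^{=1}$-part containing the center $Z$. This coefficient is $\le 1$, with equality exactly when $Z$ coincides with an $|I|$-fold stratum $\bigcap_{i\in I}E_i$ of codimension $|I|$. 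If the coefficient is $<1$ then $E_0$ does not enter $\tilde\Delta^{=1}$ and the formula is visibly unchanged, so the substantive case is equality.

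\emph{The blow-up identity and main obstacle.} In the extremal case $Z=\bigcap_{i\in I}E_i$, one computes $\nu_0=\sum_{i\in I}\nu_i$ and $N_0=\sum_{i\in I}N_i$; the stratum $E_0\cap\bigcap_{i\in J}E_i'$ for $J\subsetneq I$ is Zariski-locally a $\bP^{|I|-|J|-1}$-bundle over $Z$, hence of birational class $\{Z\}\cdot\cL^{|I|-|J|-1}$, while $\bigcap_{i\in I}E_i'\cap E_0=\emptyset$. Writing $A_i:=\cL/(\cL^{\nu_i}T^{-N_i}-1)$ and factoring out strata contributions from transverse directions (which multiply both sides by the same additional factors $A_k$ for $k\notin I$), the invariance of the zeta function collapses to the purely formal identity
$$\prod_{i\in I}A_i = A_0\cdot\sum_{J\subsetneq I}\cL^{|I|-|J|-1}\prod_{i\in J}A_i,$$
verified by clearing denominators using $\cL+A_i=\cL\cdot\cL^{\nu_i}T^{-N_i}/(\cL^{\nu_i}T^{-N_i}-1)$ and the telescoping
$$\prod_{i\in I}(\cL+A_i)-\prod_{i\in I}A_i=\cL\sum_{J\subsetneq I}\cL^{|I|-|J|-1}\prod_{i\in J}A_i.$$
The main obstacle is justifying the inductive extraction: one must show that every divisorial log canonical place of a dlt pair is reached by a sequence of smooth blow-ups of strata, for which one invokes the theory in \cite{dFKX}. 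Preserving the localization above $\Sigma$ at each step is straightforward, as blow-up centers mapping into $\Sigma$ produce exceptional divisors again mapping into $\Sigma$.
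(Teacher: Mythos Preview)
Your formal blow-up identity is correct and is exactly the computational heart of the paper's argument. The overall plan---pass to a common snc model and compare via the behaviour under a single stratum blow-up---is also the paper's. The gap lies precisely in what you flag as the ``main obstacle'': you assert that one can pass from the dlt resolution $(Y_j,\Delta_j)$ to the common snc model $(Y,\tilde\Delta)$ by iteratively performing \emph{smooth} blow-ups of strata, one per extra log canonical place, but you do not justify this, and the bare citation of \cite{dFKX} does not suffice. Two issues arise. First, $Y_j$ need not be smooth, so you cannot start the induction with a smooth blow-up on $Y_j$; the paper handles this by invoking Szab\'o's theorem \cite{Sz} to replace $(Y_j,\Delta_j)$ by a log resolution $Y_j'$ that is an isomorphism over the snc locus, and then uses \cite[9]{dFKX} to see that $\cD(\Delta_j)=\cD(\Delta_{Y_j'}^{=1})$, so the zeta function is unchanged. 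Second, even between the two smooth snc models $Y_1'$ and $Y_2'$ there is no reason that one is reached from the other by a sequence of stratum blow-ups alone; one needs the weak factorization theorem, which yields a chain of smooth blow-ups \emph{and blow-downs} of smooth centers. The paper then checks (following the proof of \cite[Proposition~11]{dFKX}) that at each step the dual complex of $\Theta_i^{=1}$ either is unchanged (when the center is not a full stratum) or changes by a stellar subdivision or its inverse (when it is), and in the latter case your identity applies.

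In short, your inductive scheme ``extract one log canonical place at a time by a smooth stratum blow-up'' is not known to connect $(Y_j,\Delta_j)$ to $(Y,\tilde\Delta)$; the relevant result in \cite{dFKX} that you want to invoke is itself proved via Szab\'o plus weak factorization, and the paper traces that argument explicitly. Once that reduction is in place, your computation for a single stratum blow-up is essentially the same as the paper's (the paper phrases the identity as $\cL^{\nu}T^{-N}-1=\sum_{L\subsetneq K}\prod_{i\in K\setminus L}(\cL^{\nu_i}T^{-N_i}-1)$ and proves it by induction on $|K|$, which is your telescoping rewritten). Your treatment of the localization above $\Sigma$ is fine.
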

\begin{proof}
For the proof, we trace each step in the proof of \cite[Proposition 11]{dFKX} and show that  the birational zeta function  after each step does not change.

Let $\mu_j:(Y_j,\Delta_j)\to (X,D)$ be two dlt crepant-birational resolutions, $j=1, 2.$ By \cite{Sz}, every dlt pair has a log resolution that is an isomorphism over the simple normal crossings locus. Let $\rho_j:Y_j'\to (Y_j,\Delta_j)$ be such a log resolution, and let $\mu'_j=\mu_j\circ\rho_j$. 
Let $\Delta_{Y'_j}$ be the log pull-back of $\Delta_j$ to $Y'_j$. Then $\Delta_{Y'_j}$ is a sub-boundary. 

Moreover, the dual complexes $\cD(\Delta_j)$ and $\cD(\Delta_{Y'_j}^{=1})$ are equal, see \cite[9]{dFKX}. So the strata formed by the intersections of divisors in $\Delta_j$ and $\Delta_{Y_j'}^{=1}$ do not change birational equivalence class. Hence the birational zeta function of $\mu_j$ equals that of $\cD(\Delta_{Y'_j}^{=1})$, where the latter is as in Definition \ref{rmkDD}(1). It is  also clear that the birational zeta function of $\mu_j$ localized above $\Sigma$ equals that of $(\cD(\Delta_{Y'_j}^{=1}),\Sigma)$, where the latter is as in Definition \ref{rmkDD}(2).

Now, by the weak factorization theorem, there is a sequence of blow-ups of smooth centra and their inverses
$$
Y_1'=Z_0 \overset{\pi_0}\dashrightarrow Z_1 \overset{\pi_1}\dashrightarrow\cdots\overset{\pi_{r-1}}\dashrightarrow Z_r=Y'_2,
$$ 
such that there exists an $m$ for which $\pi_0^{-1}\circ\ldots\circ\pi_{i-1}^{-1}:Z_i\dashrightarrow Y_1'$ are morphisms for $i\le m$, and $\pi_{r-1}\circ\ldots\circ\pi_i:Z_i\dashrightarrow Y_2'$ are morphisms for $i\ge m$; for the full formulation of the weak factorization theorem, see \cite[1.7.1]{V}. Let $\Theta_i$ be the log pull-back of $\Delta_{Y_1'}$ for $i\le m$, and the log pull-back of $\Delta_{Y'_2}$ for $i\ge m$, for these morphisms. Since the $\mu_j$ are crepant-birational over $X$, it follows that these two definitions for $\Theta_m$ agree. As in the proof of \cite[Proposition 11]{dFKX}, after each $\pi_i$ the dual complex $\cD(\Theta_i^{=1})$ either does not change, or it changes by a stellar subdivision or its inverse.  If it changes, it corresponds to blowing up a stratum of $\Theta_i^{=1}$, or its inverse. If it does not change, the birational zeta function of $\cD(\Theta_i^{=1})$ does not change either.

So it suffices to show the following. Let $\mu:Y\to (X,D)$ be a log resolution, $\Delta_Y$ a  sub-boundary on $Y$ with support included in $\mu_*^{-1}(D)+\Ex(\mu)$. Let $h_K:Y'\to Y$ be the blow-up of an irreducible component $Z$ of $E_K$ for some $\emptyset\neq K\subset S$, where $\Delta_Y^{=1}=\cup_{i\in S}E_i$. Let $\Delta_{Y'}$ be the log pull-back of $\Delta_Y$ on $Y'$. 
Then we need to show that the birational zeta function of $(\cD(\Delta_Y^{=1}),\Sigma)$ equals that of $(\cD(\Delta_{Y'}^{=1}),\Sigma)$.

For $I\subset S$, let $\sigma_I\subset \cD(\Delta_Y^{=1})$ be the subcomplex spanned by the vertices $i\in I$. The top-dimensional cells of $\sigma_I$ correspond to the irreducible components of the intersection $E_I$. We call them for short {\it the top cells of $\sigma_I$}.

Denote by $\tau_Z$ the top cell of $\sigma_K$ corresponding to $Z\subset E_K$.
Let $E_0'$ be the  exceptional divisor introduced by $h_K$, where we make the convention that $0\not\in S$. Let $S'=S\cup\{0\}$. This is the index set for the irreducible components of the support of $\Delta_{Y'}^{=1}$, since $\cD(\Delta_{Y'}^{=1})$ is the stellar subdivision of $\cD(\Delta_{Y}^{=1})$ corresponding to an interior point of $\tau_Z$.

Let $S_\Sigma=\{i\in S\mid \mu(E_i)\subset\Sigma\}$. We define $S'_\Sigma$ analogously as the subset of indices in $S'$ corresponding to the irreducible components of $\Delta_{Y'}^{=1}$ mapping into $\Sigma$.

We have
\[
Z^{bir}_{\cD(\Delta_Y^{=1}),\Sigma}(T) = \sum_{\substack{K \not\subset I \subset S \\ I \cap S_\Sigma\neq\emptyset }} \{E_I\} \prod_{i \in I} \frac{\cL}{\cL^{\nu_i} T^{-N_i} - 1} + \sum_{\substack{K \subset I \subset S \\ I \cap S_\Sigma\neq\emptyset }} \{E_I\} \prod_{i \in I} \frac{\cL}{\cL^{\nu_i} T^{-N_i} - 1}.
\]
In the last sum, for $K\subset I\subset S$, we further decompose
$$
\{E_I\} =\sum_{\substack{\tau\text{ top cell of }\sigma_I \\ \tau\supset \tau_Z}}\{Z_\tau\} +  \sum_{\substack{\tau\text{ top cell of }\sigma_I \\ \tau\not\supset \tau_Z}}\{Z_\tau\}, 
$$
where the sums  are over the top cells $\tau$ of $\sigma_I$, and $Z_\tau$ is the  irreducible component of $E_I$ corresponding to $\tau$.

On the other hand,
\[
Z^{bir}_{\cD(\Delta_{Y'}^{=1}),\Sigma}( T) = \sum_{\substack{ K \not\subset I \subset S \\ I \cap S_\Sigma\neq\emptyset }} \{E_I\}\prod_{i\in I} \frac{\cL}{\cL^{\nu_i} T^{-N_i} - 1} + \sum_{\substack{K \subset I \subset S \\ I \cap S_\Sigma\neq\emptyset }} \sum_{\substack{\tau\text{ top cell of }\sigma_I \\ \tau\not\supset \tau_Z}}\{Z_\tau\} \prod_{i \in I} \frac{\cL}{\cL^{\nu_i} T^{-N_i} - 1} +\]
\[
+ \frac{\cL}{\cL^\nu T^{-N} - 1} \sum_{K \subset I \subset S} \sum_{L \subsetneq K : (*)} \sum_{\substack{\tau\text{ top cell of }\sigma_I \\ \tau\supset \tau_Z}}\{Z_\tau\} \cL^{|K| - |L| - 1} \prod_{i \in I \setminus (K \setminus L)} \frac{\cL}{\cL^{\nu_i} T^{-N_i} - 1},
\]
where \( \nu = \sum_{k \in K} \nu_k \) and \( N = \sum_{k \in K} N_k \). Here the condition ($*$)  is that $\{0\}\cup (I\setminus (K \setminus L))$ intersects $S'_\Sigma$ non-trivially. 

It is not difficult to see that for every $I$ with $K\subset I\subset S$ we have the equivalence $I\cap S_\Sigma\neq \emptyset \iff (\{0\}\cup (I\setminus K))\cap S'_\Sigma \neq\emptyset$.

So it suffices to show for every $I$ with $K\subset I\subset S$ that
\[
\prod_{i \in I} \frac{1}{\cL^{\nu_i} T^{-N_i} - 1} = \frac{1}{\cL^\nu T^{-N} - 1} \sum_{L \subsetneq K} \prod_{i \in I\setminus (K\setminus L)} \frac{1}{\cL^{\nu_i} T^{-N_i} - 1}.
\]
This is equivalent to the following equalities:
\[
\prod_{i \in K} \frac{1}{\cL^{\nu_i} T^{-N_i} - 1} = \frac{1}{\cL^\nu T^{-N} - 1} \sum_{L \subsetneq K} \prod_{i \in L} \frac{1}{\cL^{\nu_i} T^{-N_i} - 1} \iff
\]
\[
\cL^\nu T^{-N} - 1 =
\prod_{i \in K} (\cL^{\nu_i} T^{-N_i} - 1) \sum_{L \subsetneq K} \prod_{i \in L} \frac{1}{\cL^{\nu_i} T^{-N_i} - 1} \iff
\]
\[
\cL^\nu T^{-N} - 1 =
 \sum_{L \subsetneq K} \prod_{i \in K\setminus L} (\cL^{\nu_i} T^{-N_i} - 1).
\]
Now we induct on $|K|$. The last equality is trivial for $|K|=1$. For $|K|>1$, fix $k\in K$. Then
\begin{align*}
\sum_{L \subsetneq K} \prod_{i \in K\setminus L} (\cL^{\nu_i} T^{-N_i} - 1)&= (\cL^{\nu_k}T^{-N_k}-1)+((\cL^{\nu_k}T^{-N_k}-1)+1)\sum_{L\subsetneq K\setminus\{k\}}\prod_{i\in (K\setminus\{k\})\setminus L}(\cL^{\nu_i}T^{-N_i}-1) \\
&= \cL^{\nu_k}T^{-N_k} -1+ \cL^{\nu_k}T^{-N_k}\left(\cL^{\sum_{i\in K\setminus\{k\}} \nu_i} T^{-\sum_{i\in K\setminus\{k\}} N_i}-1\right) 
\end{align*}
by the induction assumption. This is now easily seen to equal $\cL^{\nu}T^{-N}-1$. 
\end{proof}


\subs{\bf Contact loci and dlt modifications.}\label{subMsep} We  recall now some facts about contact loci and  how dlt valuations produce irreducible components of the contact loci, following \cite{B+}. 

Let $X$ be a smooth complex algebraic variety of dimension $n$, $D$  a non-zero reduced divisor on $X$, and $\Sigma$  a closed subset of the support of $D$.  
For $m\ge 1$, define the {\it $m$-contact locus of $(X,D,\Sigma)$} as the subset of $m$-jets on $X$ with contact order $m$ along $D$ and center in $\Sigma$:
$$
\sX_m=\sX_m(X,D,\Sigma)\vcentcolon=\{\gamma\in \sL_m(X)\mid \ord_\gamma(D)=m, \gamma(0)\in \Sigma\}.
$$

For $l\in \bZ_{\ge m}\cup\{\infty\}$, we can define similarly the $m$-contact locus $\sX_m^l$ in $\cL_l(X)$ by replacing in the above $\cL_m(X)$ with $\cL_l(X)$. Here,  $\cL_\infty(X)= \varprojlim\cL_m(X)=\Homo_{\bC-\text{sch}}(\Spec(\bC\llbracket t\rrbracket),X)$ is the {\it arc space} of $X$, where the inverse limit is taken for the natural truncation morphisms $\pi_{l, m}:\sL_l(X)\to\sL_m(X)$. While the $\cL_l(X)$ are in general $\bC$-schemes, for us this notation will mean the underlying reduced closed subschemes.

Since $X$ is smooth, the truncation morphism $\pi_{l, m}$ is a locally trivial fibration  with fiber $\bA^{(l-m)n}$, and
$\sX^l_m=\pi_{l,m}^{-1}(\sX_m)$. So the irreducible components of $\sX_m^l$ and those of $\sX_m=\sX_m^m$ determine each other.

Fix an {\it $m$-separating log resolution $\mu:Y\to X$ of $(X,D,\Sigma)$}, that is, $\mu$ is a projective log resolution of $(X,D)$ that is an isomorphism over $X\setminus D$, $\mu^{-1}(\Sigma)$ is pure of codimension 1, and, setting $\mu^*D=\sum_{i\in S}N_i E_i$, for every intersecting $E_i$ and $E_j$ with $i\neq j\in S$, the condition $N_i+N_j>m$ must hold.  Then one has a partition into smooth locally closed subsets
\be\label{eqDec}
\sX^\infty_m = \sqcup_{i\in S_m}\sX^\infty_{m,i},
\ee
where $\sX^\infty_m$ is the $m$-contact locus of $(X,D,\Sigma)$ in the arc space $\sL_\infty(X)$, $\sX^\infty_{m,i}$ is the subset of arcs in $\sX_m^\infty$ which lift, necessarily uniquely, to an arc on $Y$ with center on $E_i^\circ$, and $$S_m=\{i\in S\mid N_i\text{ divides } m \text{ and } \mu(E_i)\subset \Sigma\}.$$ 
The equation (\ref{eqDec}) is still true by replacing  arcs with a high-enough jet level $l\gg m$,  by defining  $\sX^l_{m,i}=\pi_{\infty,l}(\sX^\infty_{m,i})$. Then
$$\sX^l_m = \sqcup_{i\in S_m}\sX^l _{m,i}.
$$

We will denote by $\sX_{m,i}$ the irreducible component of $\sX_m$ determined by the closure of $\sX_{m,i}^l$, assuming the latter is an irreducible component of $\sX_m^l$.

We know `everything' about $\sX^l_{m,i}$ for $l\gg m$ and $i\in S_m$, see \cite[Prop. 3.2]{B+} and its proof. Define 
$$\sY_{m,i}^l\vcentcolon=\{\tilde \gamma \in\sL_l(Y)\mid \ord_{\tilde\gamma}(\mu^*D) =m, \tilde\gamma(0)\in E_i^\circ\}.$$ Then $\mu_l:\sY^l_{m,i}\to \sX^l_{m,i}$ is  a Zariski locally trivial fibration with fiber $\bA^{(\nu_i-1)m/N_i}$. Moreover, the map $\sY^l_{m,i}\to E^\circ_i$, sending a jet to its center, factorizes through the $\bC^*$-normal bundle of $E_i^\circ$, over which it is a Zariski locally trivial fibration with fiber $\bA^{nl-m/N_i}$. We conclude the following equalities.

\begin{lemma}\label{lemXmi}
If  $\sX_{m,i}$ is an irreducible component of $\sX_m$ for some $i\in S_m$, then
\be\label{eq0}
\{\sX_{m,i}\}\cL^{-mn} = \{\sX^l_{m,i}\}\cL^{-(l-m)n} \cL^{-mn}= \{E_i\} \cL^{-\nu_im/N_i}\cL\quad \in\bZ[{\rm{Bir}}_\bC][\cL^{-1}],
\ee
which is independent of $l\gg m$.
\end{lemma}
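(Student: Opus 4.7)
The proof has two separate equalities to check, and both reduce to the multiplicativity of birational classes under Zariski locally trivial fibrations with rational fibers. I would begin by recording the following basic principle: if $f\colon Z\to W$ is a Zariski locally trivial fibration of varieties with fiber a rational variety $F$ of dimension $d$, then trivializing over a nonempty open of $W$ shows that $Z$ is birational to $W\times F$, so $\{Z\}=\{W\}\cdot\{F\}$ in $\bZ[\Bir_\bC]$. The two cases needed are $\{W\times\bA^k\}=\{W\}\cL^k$ and $\{W\times\bC^*\}=\{W\}\cL$, the latter because $\bC^*$ is birational to $\bA^1$.

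For the first equality, I would invoke the fact that, since $X$ is smooth, the truncation $\pi_{l,m}\colon \sL_l(X)\to \sL_m(X)$ is Zariski locally trivial with fiber $\bA^{(l-m)n}$, and this restricts to a Zariski locally trivial fibration $\sX_m^l\to \sX_m$ with the same fiber. Because the fiber is irreducible, irreducible components on the two sides are in bijection via $\pi_{l,m}$, and $\sX_{m,i}^l$ is birational to $\sX_{m,i}\times \bA^{(l-m)n}$. This immediately gives $\{\sX_{m,i}^l\}=\{\sX_{m,i}\}\cdot \cL^{(l-m)n}$, and rearranging yields the first equality; it also shows that both sides are independent of $l\gg m$.

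For the second equality, I would combine the two Zariski locally trivial fibrations recalled just before the statement. First, $\mu_l\colon \sY_{m,i}^l\to \sX_{m,i}^l$ has fiber $\bA^{(\nu_i-1)m/N_i}$, so
\[
\{\sY_{m,i}^l\}=\{\sX_{m,i}^l\}\cdot \cL^{(\nu_i-1)m/N_i}.
\]
Second, $\sY_{m,i}^l\to (\text{$\bC^*$-normal bundle of }E_i^\circ)$ has fiber $\bA^{nl-m/N_i}$, and the $\bC^*$-normal bundle is a $\bC^*$-torsor over the smooth $E_i^\circ$, hence Zariski locally trivial and birational to $E_i^\circ\times \bC^*$. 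Since $\{E_i^\circ\}=\{E_i\}$ and $\{\bC^*\}=\cL$, this gives
\[
\{\sY_{m,i}^l\}=\{E_i\}\cdot \cL \cdot \cL^{nl-m/N_i}=\{E_i\}\cdot \cL^{nl-m/N_i+1}.
\]
Equating the two expressions for $\{\sY_{m,i}^l\}$, solving for $\{\sX_{m,i}^l\}$, and multiplying by $\cL^{-(l-m)n}\cL^{-mn}=\cL^{-ln}$ produces $\{E_i\}\cL^{-\nu_im/N_i}\cL$, yielding the second equality.

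There is no serious obstacle: the only delicate input is the identification $\{Z\}=\{W\}\cdot\{F\}$ for a Zariski locally trivial fibration with rational fiber, which follows by trivializing over a nonempty open. The remainder is bookkeeping of the exponents $(l-m)n$, $(\nu_i-1)m/N_i$ and $nl-m/N_i$, together with the extra factor of $\cL$ contributed by the $\bC^*$-direction of the normal bundle.
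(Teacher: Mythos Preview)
Your proposal is correct and follows essentially the same approach as the paper: the paper states the two Zariski locally trivial fibrations $\mu_l:\sY^l_{m,i}\to \sX^l_{m,i}$ and $\sY^l_{m,i}\to (\bC^*\text{-normal bundle of }E_i^\circ)$ immediately before the lemma and simply says ``We conclude the following equalities,'' so you have just made explicit the multiplicativity principle for birational classes under Zariski locally trivial fibrations with rational fiber and carried out the exponent bookkeeping that the paper leaves to the reader.
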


Now let $\Delta=\mu_*^{-1}(D)+\Ex^1(\mu)$, which in this case equals $(\mu^*(D))_{red}$. Let 
\be\label{eqMM}
\xymatrix{
(Y,\Delta)  \ar[dr]_\mu \ar@{-->}[rr]^{\phi}& & (Y',\Delta') \ar[dl]^{\mu'} \\
& X &
}
\ee
be a minimal model of $(Y,\Delta)$ over $X$. See \cite[Definition 1.19]{Ko} for definitions. Here $\Delta'=\phi_*(\Delta)=((\mu')^*(D))_{red}$. The existence of  minimal models is due to \cite{OX}; however, for this we need to assume that $X$ is quasi-projective and $\mu$ is projective, not just proper, which we have already assumed, and in which case $\mu'$ is also projective; see also \cite[Theorem 1.34]{Ko}. Then $(Y',\Delta')$ is a projective (over $X$) dlt modification of $(X,D)$. 

\begin{theorem}\label{thmB} (\cite[Theorem 1.13]{B+}) Let $X$ be a smooth  quasi-projective complex variety, $D$ a reduced divisor on $X$, $\Sigma$ a closed subset of the support of $D$, $m\ge 1$, and $\mu$ an $m$-separating log resolution of $(X,D)$. With notation as above, we have the following.
\begin{enumerate}
\item If $E_i$, with $i\in S_m$, does not get contracted on $(Y',\Delta')$, the closure $\ol{\sX^\infty_{m,i}}$ is an irreducible component of $\sX^\infty_m(X,D,\Sigma)$. 

\item
Conversely, every prime divisor $E$ over $\Sigma$ in a projective dlt modification of $(X,D)$ is the strict transform of a prime divisor $E_i$ as in (1) for any $m$-separating log resolution, if
 $\ord_E(D)$ divides $m$.
\end{enumerate}
\end{theorem}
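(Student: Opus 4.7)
The plan is to leverage the decomposition (\ref{eqDec}) together with the explicit Zariski locally trivial fiber bundle description of each stratum given in Lemma \ref{lemXmi}. Each $\sX^\infty_{m,i}$ is smooth and irreducible (being fibered with affine fibers over the irreducible $E_i^\circ$), so the content of the theorem lies in the closure relations among the strata: part (1) asserts non-containment of $\overline{\sX^\infty_{m,i}}$ in any $\overline{\sX^\infty_{m,j}}$ for $j\neq i$, and part (2) reduces a given dlt valuation to the setting of (1) on a suitable $m$-separating log resolution.

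For part (1), suppose $E_i \subset Y$ is not contracted by the birational map $\phi : Y \dashrightarrow Y'$ of (\ref{eqMM}), so its strict transform $E'_i \vcentcolon= \phi_* E_i$ is a prime divisor on $Y'$. Factoring $\phi$ through a common log resolution $Z$ of $Y$ and $Y'$, one has a well-defined ``lift-center'' assignment sending a generic $\gamma \in \sX^\infty_{m,i}$ to the center on $Y'$ of its canonical lift, which lands in the generic point of $E'_i$. I would argue by contradiction: if $\overline{\sX^\infty_{m,i}} \subseteq \overline{\sX^\infty_{m,j}}$ for some $j\neq i$ in $S_m$, then the generic arc in $\sX^\infty_{m,i}$ would be a specialization of arcs from $\sX^\infty_{m,j}$, forcing $E'_i \subset \overline{\phi(E_j^\circ)}$. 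If $E_j$ is contracted by $\phi$, then $\phi(E_j^\circ)$ has codimension at least two in $Y'$, contradicting $E'_i$ being a divisor. If $E_j$ is not contracted, then $\phi_* E_j = E'_i$, forcing $j = i$, since distinct non-contracted prime divisors on $Y$ have distinct strict transforms on $Y'$. Either case yields the desired contradiction.

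For part (2), let $E$ be a prime divisor on a projective dlt modification $(Y'',\Delta'')$ of $(X,D)$ with image in $\Sigma$ and $\ord_E(D) \mid m$, and let $\mu : Y \to X$ be an $m$-separating log resolution dominating $Y''$ (any $m$-separating log resolution can be dominated by one that also factors through $Y''$, and we may restore the $m$-separation condition by further blow-ups). The divisor $E$ appears on $Y$ as a prime divisor $E_i$, since $Y \to Y''$ is a log resolution extracting $E$ generically. Because $(Y'',\Delta'')$ is already a minimal model over $X$, the MMP from $(Y, \Delta)$ over $X$ does not contract $E_i$; applying part (1) to $Y$ then produces the required irreducible component $\overline{\sX^\infty_{m,i}}$ of $\sX^\infty_m$.

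The hard part is making the specialization argument in (1) rigorous, i.e.\ translating ``$E_i$ survives the MMP'' into ``$\overline{\sX^\infty_{m,i}}$ is not contained in another stratum's closure''. A clean strategy is to argue step by step along the MMP sequence $\phi$: each $(K_Y + \Delta)$-negative extremal contraction identifies strata in arc space only in the direction of absorbing the contracted stratum's closure into one with strictly smaller $\nu/N$-ratio, via the negativity lemma applied to $K_{Y'} + \Delta'$ being $\mu'$-nef. The $\mu'$-nef condition then obstructs the reverse inclusion, guaranteeing that $\overline{\sX^\infty_{m,i}}$ survives as an irreducible component whenever $E_i$ survives as a divisor on $Y'$.
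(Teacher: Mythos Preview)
The paper does not prove this theorem; it is quoted from \cite[Theorem 1.13]{B+}, with only a Remark correcting a step in the proof of part (2) in that reference. So there is no full argument in the present paper to compare against. That said, your sketch has two genuine gaps that the proof in \cite{B+} (and the Remark here) address differently.

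\medskip

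\textbf{Part (1).} Your contradiction hinges on the claim that if $\overline{\sX^\infty_{m,i}}\subset\overline{\sX^\infty_{m,j}}$ then $E'_i\subset\overline{\phi(E_j^\circ)}$, via a ``lift-center'' map to $Y'$. But $\phi$ is only birational, and the lift of an arc on $X$ to $Y'$ is not defined for all arcs (only for those not contained in the indeterminacy locus after lifting to $Y$), nor is the assignment $\gamma\mapsto\text{(center of lift on }Y')$ continuous on families of arcs. Specialization of arcs in $\sL_\infty(X)$ does not in general force specialization of their lift-centers on a birational model. Your final paragraph gestures at the correct mechanism (the $\mu'$-nefness of $K_{Y'}+\Delta'$ together with the negativity lemma controls log discrepancies, and hence the codimensions of the $\sX^l_{m,i}$ via Lemma \ref{lemXmi}), but this replaces rather than repairs the lift-center argument, and you have not actually carried it out.

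\medskip

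\textbf{Part (2).} You write ``Because $(Y'',\Delta'')$ is already a minimal model over $X$, the MMP from $(Y,\Delta)$ over $X$ does not contract $E_i$.'' This is the crux, and it does not follow automatically: running the MMP from $(Y,\Delta)$ may land on a \emph{different} minimal model $(Y',\Delta')$, and one must argue that a prime divisor surviving on one minimal model survives on all of them. This is exactly the content of the Remark following the theorem in the paper, which invokes \cite[Lemma 2.11]{B+} twice (comparing first $(Y'',\Delta'')$ with a minimal model of a log resolution $\tilde Y'$ isomorphic over the non-klt locus, then with a minimal model of the arbitrary $m$-separating $Y$). Moreover, the statement is ``for \emph{any} $m$-separating log resolution'', not just one dominating $Y''$; your parenthetical about passing to a common refinement does not by itself bring the conclusion back down to an arbitrary $\mu$. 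The corrected argument in the Remark handles precisely this descent.
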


\begin{rmk}
In the proof of \cite[Lemma 3.4]{B+}, which was used for (2) in the above theorem, the last part starting from ``By sufficiently blowing up the klt locus ..." has to be replaced by: ``There exists a projective log resolution $\tilde Y'\to(Y',\Delta')$ which is an isomorphism over the  non-klt locus, by \cite{Sz}. Let $\tilde \Delta'\subset \tilde Y'$ be the reduced inverse image of $D$. So $(Y',\Delta')$ is a minimal model of $(\tilde Y',\tilde \Delta')$ over $X$ by Lemma 2.10. By further blowing up $\tilde Y'$ we obtain an $m$-separating log resolution $Y\to X$ of $(X,D,\Sigma)$. So $v$ is an $m$-valuation, corresponding to the strict transform of $E$ on $Y$. By applying twice Lemma 2.11, which is available for quasi-projective $X$, we conclude that $E$ has a non-zero strict transform on any minimal model of $(Y,\Delta)$ over $X$, where  $\Delta$ is the reduced inverse image of $D$."  
\end{rmk}

\subs{\bf Birational zeta functions and proof of Theorem 1.4.} 

\begin{definition} The set of irreducible components produced by Theorem \ref{thmB} for the contact locus $\sX^\infty_m(X,D,\Sigma)\subset\cL_\infty(X)$ is into one-to-one correspondence with a subset of the irreducible components of $\sX_m=\sX_m(X,D,\Sigma)\subset\cL_m(X)$, since $X$ is smooth, as explained above. Denote by $\sX_m^{dlt}$ the union of the irreducible components of $\sX_m$ thus produced. 
Define the {\it birational zeta function of $(X,D,\Sigma)$} as
$$
Z^{\, bir}_{X,D,\Sigma}(T) \vcentcolon= \sum_{m\ge 1}\{\sX_m^{dlt}\}\cL^{-mn}T^m\quad\in\bZ[\Bir_\bC][\cL^{-1}]\llbracket T \rrbracket.
$$
If $D$ is the zero locus of a reduced regular function $f:X\to\bA^1$, we will simply denote the birational zeta function by $Z^{bir}_{f,\Sigma}(T)$. If $\Sigma$ is the support of $D$, we suppress it from the notation and write $Z^{\, bir}_{X,D}(T)$ and $Z^{bir}_f(T)$, respectively.
\end{definition}

Theorem \ref{thmMain} follows from the next theorem by taking $\Sigma$ to be the whole support of $D$. 

\begin{theorem}\label{thmMain2} Let $X$ be a smooth quasi-projective complex algebraic variety, $D$ a non-zero reduced divisor on $X$, and $\Sigma$ a closed subset of the support of $D$. Let $\mu:(Y,\Delta)\to (X,D)$ be a  dlt modification of $(X,D)$. Then 
$$
Z^{\, bir}_{X,D,\Sigma}(T)=Z^{\, bir}_{X,D,\Sigma,\mu}(T).
$$
\end{theorem}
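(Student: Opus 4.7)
The strategy is to compare the coefficients of $T^m$ on the two sides for each $m\ge 1$. First, by Proposition \ref{propInv} and the existence of projective dlt modifications over a quasi-projective base \cite{OX}, we may reduce to the case where $\mu$ is projective and $\mu^{-1}(\Sigma)\subset \Supp(\Delta)$, so that for any nonempty $I\subset S$, $\mu(E_I)\subset\Sigma$ is equivalent to $I\cap S_\Sigma\neq\emptyset$; in this setting Theorem \ref{thmB} applies directly.

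Next I classify the dlt $m$-valuations producing irreducible components of $\sX_m^{dlt}$. By Theorem \ref{thmB}, these correspond to prime divisors on projective dlt modifications of $(X,D)$ that map into $\Sigma$ and whose order along $D$ divides $m$. Any such valuation $v$, at a generic point of its center on $Y$ (which lies in the snc locus of $(Y,\Delta)$), is a primitive monomial valuation $v=\sum_{i\in I}k_i\,\ord_{E_i}$ for a unique nonempty $I\subset S$ with $I\cap S_\Sigma\neq\emptyset$, a choice of irreducible component $Z$ of $E_I$ containing its center, and a primitive tuple $(k_i)\in\bZ_{>0}^I$. Conversely, every such primitive monomial valuation is dlt: the weighted blow-up along $Z$ introduces it as a prime divisor in an snc (hence dlt) pair, and running the MMP over $X$ (again by \cite{OX}) yields a projective dlt modification of $(X,D)$ on which $v$ persists as a divisor. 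The numerical invariants are $N_v=\sum_{i\in I}k_iN_i$ and $\nu_v=\sum_{i\in I}k_i\nu_i$, with the $m$-valuation condition $N_v\mid m$; set $\ell=m/N_v\in\bZ_{\ge 1}$.

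Each contribution is then computed via Lemma \ref{lemXmi}. The divisor $E_v$ obtained from the weighted blow-up of $Z\subset E_I^\circ$ is, over an open subset of $Z$, a weighted projective bundle of relative dimension $|I|-1$, hence birational to $Z\times\bP^{|I|-1}$; so $\{E_v\}=\{Z\}\cL^{|I|-1}$ in $\bZ[\Bir_\bC]$. Lemma \ref{lemXmi} yields
\[
\{\sX_{m,v}\}\cL^{-mn}=\{E_v\}\cL^{1-\nu_v m/N_v}=\{Z\}\cL^{|I|-\ell\sum_{i\in I}k_i\nu_i}.
\]
Summing over all dlt $m$-valuations $v$ centered over $\Sigma$ and using $\{E_I\}=\sum_Z\{Z\}$ gives an explicit formula for the $T^m$-coefficient of $Z^{\,bir}_{X,D,\Sigma}(T)$. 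On the other side, expanding each factor of the product formula as a geometric series,
\[
\prod_{i\in I}\frac{\cL}{\cL^{\nu_i}T^{-N_i}-1}=\sum_{(k'_i)\in\bZ_{>0}^I}\cL^{|I|-\sum_{i\in I}k'_i\nu_i}\,T^{\sum_{i\in I}k'_iN_i},
\]
identifies the $T^m$-coefficient of $Z^{\,bir}_{X,D,\Sigma,\mu}(T)$ as a sum over positive integer tuples with $\sum_{i\in I}k'_iN_i=m$. The change of variables $(k'_i)\leftrightarrow ((k_i)\text{ primitive},\ell)$ given by $k'_i=\ell k_i$ with $\ell=\gcd(k'_i)$ matches the two expressions term by term, completing the comparison.

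The main obstacle is the classification of dlt $m$-valuations, in particular the converse direction, which requires an MMP argument to promote a weighted blow-up along $Z$ to a projective dlt modification retaining $v$ as a divisor. The birational identification $\{E_v\}=\{Z\}\cL^{|I|-1}$, coming from the toric structure of weighted blow-ups of snc strata, also warrants careful justification.
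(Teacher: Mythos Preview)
Your route differs from the paper's: the paper works modulo $T^{m+1}$, passing to an $m$-separating log resolution, taking a minimal model $(Y',\Delta')$, and showing via the $m$-separating inequality together with the collapse theorem of \cite{dFKX} that only $|I|=1$ terms of $Z^{bir}_{X,D,\Sigma,\mu'}(T)$ survive modulo $T^{m+1}$; independence of $\mu$ is then imported from Proposition~\ref{propInv}. You instead classify all dlt valuations at once as the primitive monomial valuations on the strata of a single fixed dlt modification and match term by term via the bijection $(k'_i)\leftrightarrow((k_i)_{\text{prim}},\ell)$. This is conceptually attractive and bypasses the collapse argument entirely.

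There are two genuine problems. First, your MMP argument for the converse direction of the classification is inadequate: after running MMP on a weighted blow-up, nothing forces $E_v$ to survive, since a divisor of log discrepancy $0$ may well be contracted. The correct (and easier) argument avoids MMP: extract $v$ by a finite sequence of \emph{ordinary} blow-ups of strata of $(Y,\Delta)$ (Euclidean algorithm on the weights). Each such blow-up is crepant for $K_Y+\Delta$ and keeps the pair snc near the center, so the result is itself a projective dlt modification of $(X,D)$ on which $v$ is a prime divisor. Second, your $\Sigma$-equivalence is false even after your reduction: take $(X,D,\Sigma)=(\bA^2,\{xy=0\},\{0\})$ and $\mu=\mathrm{id}$; then $\mu^{-1}(\Sigma)\subset\Supp(\Delta)$, yet $S_\Sigma=\emptyset$ while $E_{\{1,2\}}=\{0\}\subset\Sigma$. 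In this example $Z^{bir}_{X,D,\Sigma,\mathrm{id}}(T)=0$ whereas $Z^{bir}_{X,D,\Sigma}(T)=\cL^2/(\cL T^{-1}-1)^2$, so the statement itself appears to fail for this $\mu$. The equivalence \emph{does} hold once $\mu^{-1}(\Sigma)$ is a union of irreducible components of $\Delta$ (so that a stratum $Z\subset E_I$ lying over $\Sigma$ must lie in some $E_i$ with $i\in I\cap S_\Sigma$); under that extra hypothesis, and with the corrected classification argument, your proof goes through. Note that the paper's proof shares this weakness: the line ``It is not difficult to see \ldots'' in the proof of Proposition~\ref{propInv} relies on the same implication.
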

\begin{proof} 
We will show that the truncations modulo $T^{m+1}$ of these zeta functions, viewed as formal power series in $T$, agree for every $m\ge 1$.

It is known that any two dlt modifications of $(X,D)$ are crepant-birationally equivalent, see the remark following \cite[Definition 15]{dFKX}. Hence the right-hand side of the equality  is independent of the choice of dlt modification $\mu$ by Proposition \ref{propInv}.  

Fix $m\ge 1$. We change  the notation and let $\mu:(Y,\Delta)\to (X,D)$ now denote an $m$-separating log resolution of $(X,D,\Sigma)$. Let  $(Y',\Delta')$ be a minimal model as in (\ref{eqMM}). So now $\mu'$ is a dlt modification.  We  show that $Z^{bir}_{X,D,\Sigma}(T)\equiv Z^{bir}_{X,D,\Sigma,\mu'}(T)$ modulo $T^{m+1}$. This will finish the proof of the theorem.

By definition, $\mu$ is also $k$-separating for all $1\le k\le m$. Thus, by Theorem \ref{thmB}  applied for this $\mu$ we can compute the truncation of $Z^{bir}_{X,D,\Sigma}(T)$ modulo $T^{m+1}$:
$$\sum_{k =1}^m\{\sX_k^{dlt}\}\cL^{-kn}T^k = 
\sum_{k=1}^m  \sum_{\substack{E_i\text{ dlt valuation}\\N_i\mid k, \;\mu(E_i)\subset \Sigma}} 
\{E_i\}\cL \cdot\cL^{-\nu_ik/N_i}T^k = \cL \cdot\sum_{\substack{E_i\text{ dlt valuation}\\N_i\le m, \;\mu(E_i)\subset \Sigma}}  \{E_i\} \sum_{l=1}^{\lfloor m/N_i\rfloor} \cL^{-\nu_il}T^{N_il},
$$
where the first equality follows from (\ref{eq0}).

Now we  compute the truncation of $Z^{\, bir}_{X,D,\Sigma,\mu'}(T)$ modulo $T^{m+1}$. Let $E_i'=\phi_*E_i$. Let $S'=\{i\in S\mid E_i'\neq 0\}$. This is the index set of the components of $\Delta$ which give dlt valuations. Let $S'_\Sigma=\{i\in S'\mid \mu'(E'_i)\subset\Sigma\}$. Then
\begin{align*}
	Z^{\, bir}_{X,D,\Sigma,\mu'}(T) &\equiv \sum_{\substack{\emptyset\neq I\subset S'\\ I\cap S'_\Sigma\neq\emptyset }} \{E_I'\}\prod_{i\in I} (\cL\cdot\sum_{k\ge 1}(\cL^{-\nu_i}T^{N_i})^k) \quad \text{ modulo } T^{m+1} \\
	&\equiv \sum_{\substack{\emptyset\neq I\subset S'\\ I\cap S'_\Sigma\neq\emptyset }}\{E_I'\}\prod_{i\in I} (\cL\cdot\sum_{k= 1}^{\lfloor m/N_i\rfloor}(\cL^{-\nu_i}T^{N_i})^k) \quad \text{ modulo } T^{m+1}.
\end{align*}
Here, for each term with $E'_I\neq\emptyset$, the smallest power of $T$ with non-zero coefficient appearing in the product is $T^{\sum_{i\in I}N_i}$. In this case, if $|I|\ge 2$, there exist $i\neq j\in I\subset S'$ such that $E_i'$ and $E_j'$ intersect non-trivially. If $E_i$ and $E_j$ intersect non-trivially already in $Y$, then by the $m$-separating condition, $N_i+N_j>m$, and hence the term $\{E'_I\}$ does not contribute modulo $T^{m+1}$. 

Assume now that $E_i$ and $E_j$ do not intersect in $Y$. Since the intersection $E'_i\cap E_j'$ has codimension exactly 2 in $Y'$, see \cite[Definition 8, case (5)]{dFKX},  $E_i$ and $E_j$ have to be connected in $Y$ via $\phi$-exceptional divisors. In the dual complex $\cD(\Delta')$, the vertices $i$ and $j$ are connected by one or more segments without any intermediate vertices, and in the dual complex $\cD(\Delta)$, each path between them passes through at least one vertex different from $i$ and $j$. Recall that a collapse of a regular cell complex is a sequence of elementary collapses, and that an elementary collapse  is defined by removing the interior of a free face $w$ of a cell $v$, followed by removing the interior of $v$, see \cite[Definition 18]{dFKX}. So an elementary collapse does not remove any vertex unless $v$ has dimension 1 and $w$ is a free 0-dimensional face of it.
Hence the dual complex $\cD(\Delta)$ cannot collapse to $\cD(\Delta')$. This  contradicts \cite[Theorem 28, (3)]{dFKX}, which says that $\cD(\Delta)$ has to collapse to $\cD(\Delta')$. Thus, in the above sum we only have contribution from $I\subset S'$ with $|I|=1$. Hence,

$$
Z^{\, bir}_{X,D,\Sigma,\mu'}(T) \equiv  \cL\cdot\sum_{\substack{i\in S'_\Sigma \\ N_i\le m}}\{E_i\}\sum_{k= 1}^{\lfloor m/N_i\rfloor}(\cL^{-\nu_i}T^{N_i})^k \quad \text{ modulo } T^{m+1}
$$
which agrees with $Z^{bir}_{X,D,\Sigma}(T)$ modulo $T^{m+1}$. 
\end{proof}

The following formula expresses the birational zeta function  in terms of codimension 1 strata. The deeper strata are traded at the expense of a possibly infinite sum.

\begin{prop}\label{propSumE}
With $X$, $D$ and $\Sigma$ as in Theorem \ref{thmMain2}, 
$$Z^{bir}_{X,D,\Sigma}(T) = 
\sum_{\substack{E \text{ dlt valuation}\\ \text{centered in }\Sigma}}    
\frac{\{E\}\cL}{\cL^{\nu_E}T^{-N_E}-1}.$$
\end{prop}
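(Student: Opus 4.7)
The plan is to compute $Z^{bir}_{X,D,\Sigma}(T)$ directly from its definition as a generating series over contact loci, then swap the order of summation so that the outer sum runs over dlt valuations rather than over $m$.

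First, I would unpack the definition
$$Z^{bir}_{X,D,\Sigma}(T) = \sum_{m \geq 1} \{\sX_m^{dlt}\}\, \cL^{-mn}\, T^m.$$
By Theorem \ref{thmB}, applied with any $m$-separating log resolution and a projective dlt modification, the irreducible components of $\sX_m^{dlt}$ are in bijection with the dlt valuations $E$ centered in $\Sigma$ such that $N_E$ divides $m$, and Lemma \ref{lemXmi} gives
$$\{\sX_{m,E}\}\, \cL^{-mn} = \{E\}\,\cL\cdot \cL^{-\nu_E m / N_E}$$
for the irreducible component $\sX_{m,E}$ associated with such an $E$. Substituting,
$$Z^{bir}_{X,D,\Sigma}(T) = \sum_{m \geq 1}\; \sum_{\substack{E\text{ dlt valuation}\\\text{centered in }\Sigma\\ N_E \mid m}} \{E\}\,\cL\cdot \cL^{-\nu_E m / N_E}\, T^m.$$

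Next I would justify exchanging the two sums. The power series lies in $\bZ[\Bir_\bC][\cL^{-1}]\llbracket T \rrbracket$, so it suffices to check that the coefficient of each $T^m$ is a finite sum. That is clear because the inner sum for fixed $m$ is finite: by Theorem \ref{thmB}(2) every dlt valuation with $N_E \mid m$ corresponds to a prime divisor on any fixed $m$-separating log resolution, of which there are only finitely many. Having established this, I can rewrite the double sum as
$$Z^{bir}_{X,D,\Sigma}(T) = \sum_{\substack{E\text{ dlt valuation}\\\text{centered in }\Sigma}} \{E\}\,\cL \; \sum_{k \geq 1} \cL^{-\nu_E k}\, T^{N_E k}$$
by substituting $m = N_E k$.

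The last step is a routine identification of the inner geometric series. Summing,
$$\sum_{k \geq 1} \cL^{-\nu_E k}\, T^{N_E k} = \frac{\cL^{-\nu_E} T^{N_E}}{1 - \cL^{-\nu_E} T^{N_E}} = \frac{1}{\cL^{\nu_E}T^{-N_E} - 1},$$
so that the whole expression collapses to the claimed formula. The main thing to be careful about is not an algebraic obstacle but a bookkeeping one, namely verifying that the bijection from Theorem \ref{thmB} and the class computation from Lemma \ref{lemXmi} together cover precisely the components contributing to $\sX_m^{dlt}$ with no double counting and no missing terms; everything else is simple Fubini plus a geometric series.
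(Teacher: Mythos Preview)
Your proposal is correct and follows essentially the same approach as the paper. The paper's own proof is shorter only because it refers back to the identical computation already carried out in the proof of Theorem~\ref{thmMain2} (namely the identity $\sum_{k=1}^m\{\sX_k^{dlt}\}\cL^{-kn}T^k=\cL\cdot\sum\{E_i\}\sum_{l=1}^{\lfloor m/N_i\rfloor}\cL^{-\nu_i l}T^{N_i l}$, obtained via Theorem~\ref{thmB} and Lemma~\ref{lemXmi}), whereas you redo that computation explicitly; both then reduce to the same geometric-series summation.
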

\begin{proof} 
It is enough to show that the truncations modulo $T^{m+1}$ agree for every $m\ge 1$. Fix such $m$. Fix an $m$-separating log resolution $\mu:Y\to X$ of $(X,D,\Sigma)$. The truncation of the left-hand side modulo $T^{m+1}$ is computed in Theorem \ref{thmMain2} using $\mu$. This is easily seen to agree with the truncation modulo $T^{m+1}$ of the right-hand side, since the dlt valuations of $(X,D)$ centered in $\Sigma$ with $N_E\le m$ have as center a prime divisor on $Y$.
\end{proof}

A similar proof, adjusted to classes in $K_0(\Var_\bC)$, gives a similar result for the motivic zeta function. Let $f:X\to \bA^1$ be a non-constant regular function, not necessarily reduced, on a smooth complex algebraic variety $X$.
Let $\mu:Y\to X$ be a log resolution of $f$ that is an isomorphism above $X\setminus f^{-1}(0)$. Let $\Delta$ be the support of $(f\circ\mu)^{-1}(0)$. Let $\mu_1=\mu$, $\Delta_1=\Delta$. Define inductively $\mu_m$ to be an $m$-separating log resolution of $f$ obtained by blowing up a stratum of $\Delta_{m-1}$, which can always be done by \cite[Proof of Lemma 2.9]{Fl}, 
and let $\Delta_m$ be the support of the inverse image of $\Delta_{m-1}$. Let $\cD(\Delta_\bullet)=\cup_{m\ge 1}\cD(\Delta_m)$ be the limit of refinements of dual complexes $\cD(\Delta_1)\subset\cD(\Delta_{2})\subset \cD(\Delta_3)\subset\ldots$.

\begin{prop} Let $f:X\to \bA^1$ be a non-constant regular function on a smooth complex algebraic variety $X$.
Let $\mu:Y\to X$ be a log resolution of $f$ that is an isomorphism above $X\setminus f^{-1}(0)$ and let $\cD(\Delta_\bullet)$ be as above. Then
$$
Z_f^{mot}(T) = \sum_{E\in\cD(\Delta_\bullet)}\frac{[E^\circ](\bL-1)}{\bL^{\nu_E}T^{-N_E}-1},
$$ 
where the sum is over the divisorial valuations corresponding to vertices of $\cD(\Delta_\bullet)$, $E^\circ$ is the open stratum of $E$ on any $\mu_m$ on which the center of the valuation is a prime divisor $E$, $\nu_E-1$ is the order of vanishing along $K_{\mu_m}$ for such $\mu_m$, and $N_E=\ord_E(f)$. In particular, the right-hand side is independent of the choices made.
\end{prop}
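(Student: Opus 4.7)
The plan is to adapt the truncation argument of Proposition \ref{propSumE} to the motivic setting. I would show that both sides agree modulo $T^{m+1}$ in $K_0(\Var_\bC)[\bL^{-1}][T]/(T^{m+1})$ for every $m \geq 1$, then conclude the equality of power series.

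Fix $m \geq 1$ and apply the Denef--Loeser formula to the $m$-separating log resolution $\mu_m$. Writing $\Delta_m = \bigcup_{i \in S} E_i$ for the index set $S = S(\mu_m)$, with $N_i = \ord_{E_i}(f)$ and $\nu_i - 1 = \ord_{E_i}(K_{\mu_m})$, one has $Z_f^{mot}(T) = \sum_{\emptyset \neq I \subset S} [E_I^\circ] \prod_{i \in I} \frac{\bL - 1}{\bL^{\nu_i} T^{-N_i} - 1}$. Expanding $\frac{1}{\bL^{\nu_i}T^{-N_i}-1} = \sum_{k \geq 1} \bL^{-k\nu_i} T^{kN_i}$, the term indexed by $I$ contributes a lowest $T$-power equal to $T^{\sum_{i \in I} N_i}$. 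The $m$-separating property forces $N_i + N_j > m$ for any two intersecting components, so whenever $|I| \geq 2$ and $E_I \neq \emptyset$ the exponent $\sum_{i \in I} N_i$ exceeds $m$ and the contribution vanishes modulo $T^{m+1}$. Hence $Z_f^{mot}(T) \equiv \sum_{i \in S,\, N_i \leq m} [E_i^\circ] \frac{\bL-1}{\bL^{\nu_i} T^{-N_i}-1} \pmod{T^{m+1}}$.

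For the right-hand side, I would establish that the vertices $E$ of $\cD(\Delta_\bullet)$ satisfying $N_E \leq m$ are in bijection with the $i \in S$ satisfying $N_i \leq m$, each realized as a prime divisor on $\mu_m$. The key point is that every exceptional divisor produced when passing from $\mu_{m'}$ to $\mu_{m'+1}$ has $N$-value strictly greater than $m'$: each blow-up center is a stratum $E_I$ with $|I| \geq 2$, the resulting exceptional divisor has $N$-value $\sum_{i \in I} N_i$, and by the $m'$-separating property on $\mu_{m'}$ any two $i, j \in I$ (which label intersecting components since $E_I \neq \emptyset$) satisfy $N_i + N_j > m'$, so $\sum_{i \in I} N_i > m'$. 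By induction, no vertex of $\cD(\Delta_\bullet) \setminus \cD(\Delta_m)$ carries $N_E \leq m$. Moreover the open strata $[E_i^\circ]$ are preserved under further blow-ups, since a blow-up center $E_I$ with $|I| \geq 2$ is disjoint from $E_j^\circ = E_j \setminus \bigcup_{k \neq j} E_k$ for every $j$. Substituting these identifications, the truncation of the right-hand side modulo $T^{m+1}$ matches the expression for the left-hand side obtained above.

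The main technical content is the $N$-value induction, a short combinatorial consequence of the $m$-separating definition combined with the blow-up formula for $N$-values. Independence of the right-hand side from the choice of initial $\mu$ and of the sequence of blow-ups producing the $\mu_m$ then follows a posteriori from the intrinsic definition of $Z_f^{mot}(T)$ on the left-hand side.
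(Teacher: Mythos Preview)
Your proposal is correct and follows essentially the same approach the paper indicates: the paper does not give a detailed proof here but states that a proof similar to that of Proposition~\ref{propSumE}, adjusted to classes in $K_0(\Var_\bC)$, works. Your truncation argument via the Denef--Loeser formula on $\mu_m$, using the $m$-separating condition to kill the $|I|\ge 2$ terms modulo $T^{m+1}$, is exactly this adaptation; the one point you might make slightly more explicit is that each intermediate blow-up in passing from $\mu_{m'}$ to $\mu_{m'+1}$ preserves the $m'$-separating property (since the new exceptional divisor already has $N$-value $>m'$), so that your $N$-value induction applies to every blow-up in the sequence, not just the first one.
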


\subs{\bf Birational nearby cycles and Milnor fibers.}\label{subsBirMil} Recall that a simplified version of the motivic nearby cycles and the motivic Milnor fiber at $x\in f^{-1}(0)$ of a non-constant polynomial $f$ are defined in \cite{DL} by
$$
\Psi^{mot}_f\vcentcolon=-\lim_{T\to \infty} Z^{mot}_f(T) = -\sum_{\emptyset\neq I \subset S}[E_I^\circ](1-\bL)^{|I|}, 
$$
$$
\Psi_{f,x}^{mot}\vcentcolon=-\lim_{T\to \infty} Z^{mot}_{f,x}(T) = -\sum_{\emptyset\neq I \subset S}[E_I^\circ\cap \mu^{-1}(x)](1-\bL)^{|I|}, 
$$
respectively, in terms of a fixed log resolution $\mu$. Taking the virtual Poincar\'e realization of $\Psi^{mot}_f$ yields the virtual Poincar\'e realization of the eigenvalue-1 subcomplex for the semisimple monodromy action on the classical nearby cycles complex, endowed with the weight filtration, which is an embedded topological invariant of $f$. The coefficient of the top power of this realization is 
\be\label{eqBirMotC0}
-\sum_{\emptyset\neq I \subset S}(-1)^{|I|}\cdot\#(\text{irreducible components of }E_I) =  \chi(\cD(\Delta)),
\ee
the topological Euler characteristic of the dual complex associated with $\mu$.

Similarly, we can define the {\it birational nearby cycles} and the {\it birational Milnor fiber at $x$} of a reduced polynomial $f$ as
$$\Psi^{bir}_f\vcentcolon=-\lim_{T\to \infty} Z^{bir}_f(T) = -\sum_{\emptyset\neq I \subset S'}(-1)^{|I|}\{E_I\}\cL^{|I|},
$$
$$\Psi^{bir}_{f,x}\vcentcolon=-\lim_{T\to \infty} Z^{bir}_{f,x}(T) = -\sum_{\substack{\emptyset\neq I\subset S'\\ I\cap S'_x\neq\emptyset }}(-1)^{|I|}\{E_I\}\cL^{|I|},
$$
respectively,
in terms of a fixed dlt modification $\mu'$ of $f$, the expression being independent of the choice of dlt modification. 
Recall that $S'_x=\{i\in S'\mid \mu'(E'_i)=\{x\}\}$.

We note here the resemblance of $\Psi^{bir}_f$ with the formulas \cite[(3.2)]{KT} and \cite[(3.3.6)]{NO}, defining the specialization morphism, and the volume morphism, respectively, the central tools in showing specialization of birational types. However, even in the case when $f$ is replaced by a proper morphism, there is one major difference coming from the fact that our formula involves only dlt modifications.

The specialization of $\Psi^{bir}_f$ under the morphism $\rho:\bZ[\Bir_\bC]\to\bZ[\cL]$ from Definition \ref{defRational} below yields, up to a factor $\cL^n$,
$$
\sum_{\emptyset\neq I \subset S'}(-1)^{|I|}\cdot\#(\text{irreducible components of }E_I) = \chi(\cD(\Delta')),
$$
 the topological Euler characteristic of the dual complex associated with any fixed dlt modification of $f$. By the collapse property of \cite{dFKX}, we can replace $\cD(\Delta')$ by the dual complex $\cD(\Delta)$ of any log resolution of $f$, since the homotopy class does not change, and hence we obtain the same invariant as in (\ref{eqBirMotC0}).

On the other hand,  $\Psi^{bir}_{f,x}$ could be zero, while $\Psi_{f,x}$ is never zero, see the case $d<n$ of Example \ref{eqCones}. In general we do not yet understand what information $\Psi^{bir}_{f,x}$ contains about the Milnor fiber.

\subs{\bf Equivariant refinement.} Let $\hat\mu=\varprojlim {\mu_d}$, where $\mu_d$ is the group of $d$-th roots of unity, and let $K_0(\Var_\bC^{\hat\mu})$ be the Grothendieck ring of varieties endowed with a good $\hat\mu$ action from \cite{DL}.
Define the {\it restricted $m$-contact locus}
$$
\bm{\sX}_m(f)\vcentcolon=\{\gamma\in \sX_m(f)\mid f(\gamma(t))\equiv t^m\text{ mod }t^{m+1}\}.
$$
It is endowed with an obvious $\mu_m$-action. The equivariant refinement of the motivic zeta function of \cite{DL} is a formal power series in $K_0(\Var_\bC^{\hat\mu})[\bL^{-1}]\llbracket T \rrbracket$, admitting a  formula in terms of any fixed log resolution,
$$
\bm{Z}^{mot}_{f}(T)\vcentcolon= \sum_{m\ge 1}[\bm{\sX}_m(f)]\bL^{-mn}T^m=  \sum_{\emptyset\neq I\subset S}[\tilde E_I^\circ]\prod_{i\in I}\frac{\bL-1}{\bL^{\nu_i}T^{-N_i}-1},
$$
where $\tilde E_I^\circ$ is a canonical unramified  $\mu_{N_I}$-covering associated with the geometry of the log resolution, with $N_I=gcd(N_i\mid i\in I)$.

Similarly,  our results can be enhanced to the equivariant setting. We state them without proof, indicating only the differences with the above proof.

The equivariant version  $\bZ[\Bir_\bC^{\hat\mu}]$ of the ring generated by birational equivalence classes is defined for example in \cite[\S 5]{KT}. There is a decomposition $\bm{\sX}_m(f)=\sqcup_{i\in S_m}\bm{\sX}_{m,i}(f)$ similar to (\ref{eqDec}), see \cite{Fl}. 
The only difference is that $\sX_{m,i}(f)$ is not necessarily irreducible; it is however formed by finitely many copies of the same irreducible component translated by the action of $\mu_m$. Theorem \ref{thmB} holds for $\bm{\sX}_m(f)$ too, namely, a prime divisor $E_i$ on an $m$-separating log resolution that corresponds to a dlt $m$-valuation gives rise to (a union of) irreducible components $\overline{\bm{\sX}_{m,i}(f)}$ of $\bm{\sX}_m(f)$. We denote by $\bm{\sX}_m^{dlt}(f)$ the union of all the irreducible components of $\bm{\sX}_m(f)$ defined this way.
We define the equivariant version of the birational zeta function
$$
\bm{Z}^{bir}_f(T)\vcentcolon=\sum_{m\ge 1}[\bm{\sX}_m^{dlt}(f)]\cL^{-mn}T^m \in\quad \bZ[\Bir_\bC^{\hat\mu}][\cL^{-1}]\llbracket T\rrbracket.
$$
Then, in terms of a fixed dlt modification of $f$, we have
$$
\bm{Z}^{bir}_f(T) = \cL^{-1}\cdot\sum_{\emptyset\neq I\subset S}\{\tilde E_I\}\prod_{i\in I}\frac{\cL}{\cL^{\nu_i}T^{-N_i}-1}.
$$
The proof is similar. The only difference is that the analog of the morphism $\sY^l_{m,i}\to E_i^\circ$ for restricted contact loci factors through the  covering $\tilde E_i^\circ$ instead of through the normal $\bC^*$-bundle. In fact, $\tilde E_i^\circ$ can be viewed as the boundary of the normal $S^1$-bundle of $E_i^\circ$. As a  consequence, the analog of  Lemma \ref{lemXmi} becomes: $\{\bm{\sX}_{m,i}(f)\}\cL^{-mn}=\{\tilde E_i\} \cL^{-\nu_im/N_i - 1}$ in $\bZ[\Bir^{\hat\mu}_\bC][\cL^{-1}]$. This accounts for the extra $\cL^{-1}$ in the formula.

As a consequence, one can enhance the birational nearby cycles and Milnor fibers from \ref{subsBirMil} to the equivariant setting.

\section{Poles}\label{secPoles}

We define here the notion of poles of birational zeta functions. This is a thorny issue, like for the motivic zeta functions, see Remarks \ref{rmkThorny} and \ref{rmkMoreProb}, but our choice of definition is enough for our purposes. We show in this section that the monodromy conjecture cannot hold for birational zeta functions of arbitrary dlt resolutions. We also show that the analog of the former conjecture of Veys on the poles of maximal possible order of the topological zeta function, proven by Nicaise-Xu \cite[Theorem 3.5 (2)]{NX}, holds for the rational zeta function.

\begin{definition}\label{defPoles} 
 Fix $X,D,\Sigma,\mu$ as in Definition \ref{defBir}. By definition of $Z_{X,D,\Sigma,\mu}^{bir}(T)$ and Proposition \ref{propInv}, there exist subsets \(P\) of \(\bZ_{>0}\times\bZ_{>0}\), minimal with respect to inclusions among those, such that
\begin{enumerate}
\item	$Z_{X,D,\Sigma,\mu}^{bir}(T)\in\bZ[\Bir_\bC]\left[T,\frac{1}{\cL^{a}-T^{b}}\right]_{(a,b)\in P}\subset\bZ[\Bir_\bC][\cL^{-1}]\llbracket T\rrbracket, \text{ and}$
\item
$P$ is a subset of the set of numerical data $\{(\nu_i,N_i)\mid i\in S\}$	of at least one of the dlt resolutions $\mu'$ in the crepant-birational class of $\mu$. 
\end{enumerate}
A rational number $s_0=-a/b$ for $(a,b)$ in some $P$ as above is called a {\it pole of } $Z_{X,D,\Sigma,\mu}^{bir}(T)$.

	Applying this definition to dlt modifications $\mu$, we obtain the notion of poles  for the birational zeta function $Z_{X,D,\Sigma}^{bir}(T)$.
	
	A similar definition is made for motivic zeta functions, by replacing the ring $\bZ[\Bir_\bC]$ with $K_0(\Var_\bC)$.
\end{definition}

\begin{rmk}\label{rmkThorny} (i)  Due to the complicated structure of the ring $\bZ[\Bir_\bC]$ (resp. $K_0(\Var_\bC)$), the following inter-related questions are open regarding the set of poles of a fixed birational (resp. motivic) zeta function.

 Given a finite-sum expression in terms of a fixed dlt (resp. log) resolution, is there only one set $P$ as above obtained after cancellations? Is there a unique set $P$ as above, independent of the choice of dlt  resolution in the same crepant-birational equivalence class (resp. choice of log resolution)? Is the set of poles finite? If one drops the requirement (2) from Definition \ref{defPoles}, do we get the same definition? 
 
These open questions make it difficult, if not impossible, to define the order of a pole in a such a way that it can be computed from any dlt  resolution in the same crepant-birational equivalence class (resp. from any log resolution).
 
 (ii) All these issue disappear for motivic zeta functions after using a homorphism from $K_0(\Var_\bC)$ to appropriate rings. One general approach is to introduce rational powers of $\bL$ and some further localizations, see the ring $R'$ of \cite[\S 4]{RV}. Another approach is to use any specialization to an integral domain, such as the Hodge-Deligne specialization. Over these rings, one can even define the notion of the order of a pole in a such a way that it can be computed from any log resolution.
 
 For  birational zeta functions, one could build the analog of the ring $R'$ of \cite[\S 4]{RV}. Instead, to keep things simple, we can use a specialization that we define next.
 \end{rmk}

\begin{definition}\label{defRational}
There is an obvious  ring homomorphism
	$$\rho:\bZ[\Bir_\bC]\to\bZ[\cL],$$
sending the birational equivalence class $\{Z\}$ of any variety $Z$ to $\cL^{\dim Z}$.	We also denote by $\rho$  its extension to formal power series in $T$ over these rings. Slightly abusing notation, we define then the \textit{rational zeta function} as $$Z_{X,D}^{rat}(T)\vcentcolon=\rho(Z_{X,D}^{bir}(T)).$$ We define $Z_{X,D,\Sigma}^{rat}(T)$ and $Z_{X,D,\Sigma,\mu}^{rat}(T)$ similarly.

Since \(\bZ[\cL]\) is an integral domain, the definition of a pole for \(Z_{X,D}^{rat}(T)\) is   straightforward. In particular, if for some root of unity \(\xi\), \(\xi\cL^{-s_0}\) is a pole of \(Z_{X,D}^{rat}(T)\), then \(s_0\) is a pole of \(Z_{X,D}^{bir}(T)\).

\end{definition}

\begin{rmk}\label{rmkMoreProb} The requirement (2) from Definition \ref{defPoles} is not present in the definition of pole from \cite[Remark 3.7]{NX} for motivic zeta functions. In fact, \cite[Remark 3.7]{NX} defines the order of a pole. However, not having a guarantee that a pole $s_0$ arises from at least some log resolution, and, furthermore, that the order of the pole is compatible across  all log resolutions, leaves the statement of \cite[Theorem 3.5]{NX} open for (naive) motivic zeta functions, unless one takes their image in the ring $R'\llbracket T\rrbracket$ from \cite{RV}.
 \end{rmk}


\begin{remark}
	The version of Question \ref{conj:birmotpoles} for dlt resolutions which are not dlt modifications fails in general in all dimensions $n>1$. This is because the birational zeta function $Z_{X,D,\Sigma,\mu}^{bir}(T)$ picks up new poles after performing redundant blow-ups, as the next lemma shows. 

For instance, by first blowing up  a point in some $E_j^\circ$, and then repeatedly blowing up a point in the open part of the new exceptional component, one arrives at the setting of the lemma, with moreover $r=1$.
\end{remark}


\begin{lemma}\label{lem:extremalpointpole}
	Let \(X\) be a smooth quasi-projective complex algebraic variety of dimension \(n\), \(D\) a non-zero reduced divisor on \(X\), and \(\Sigma\) a closed subset of the support of \(D\). Let  \(\mu:(Y,\Delta)\to(X,D)\) be a dlt resolution such that \(\mu^{-1}(\Sigma)\) contains an exceptional component \(E_0\) satisfying
	\begin{enumerate}[label=(\arabic*)]
		\item \(\nu_0/N_0\neq\nu_i/N_i\) for all other irreducible components \(E_i\) of $\Delta$, and
		\item \(E_0\) intersects exactly \(r\) irreducible components \(E_1,\ldots,E_r\) of \(\Delta\), and \(E_{I\cup\{0\}}\) is non-empty and irreducible for every \(I\subset\{1,\ldots,r\}\). That is, \(E_0\) corresponds to an outer vertex of an outer \(r\)-simplex in \(\mathcal{D}(\Delta)\).
	\end{enumerate}
	Then $-\nu_0/N_0$ is a pole of \(Z_{X,D,\Sigma,\mu}^{bir}(T)\).
\end{lemma}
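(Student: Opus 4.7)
The plan is to pass to the rational zeta function $Z^{rat}_{X,D,\Sigma,\mu}(T) = \rho(Z^{bir}_{X,D,\Sigma,\mu}(T))$ of Definition \ref{defRational}, which lies in the fraction field of the integral domain $\bZ[\cL][T^{\pm 1}]$, and to show that the factor $\cL^{\nu_0}T^{-N_0}-1$ survives in its reduced denominator. By the last sentence of Definition \ref{defRational}, this will force $-\nu_0/N_0$ to be a pole of $Z^{bir}_{X,D,\Sigma,\mu}(T)$.

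Setting $\alpha_i \vcentcolon= \cL^{\nu_i}T^{-N_i}$, I would split the defining sum for $Z^{bir}_{X,D,\Sigma,\mu}(T)$ according to whether $0 \in I$, writing its image under $\rho$ as $A + B$ with $A$ coming from $I \ni 0$ and $B$ from $I \not\ni 0$. Since $E_0 \subset \mu^{-1}(\Sigma)$, the condition $I \cap S_\Sigma \neq \emptyset$ holds automatically for every $I \ni 0$. By hypothesis (2), $E_{\{0\}\cup J}$ is nonempty only when $J \subset \{1,\ldots,r\}$, in which case it is irreducible of dimension $n-1-|J|$, so $\rho(\{E_{\{0\}\cup J}\}) = \cL^{n-1-|J|}$. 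The telescoping identity $\sum_{J \subset \{1,\ldots,r\}} \prod_{j \in J} (\alpha_j - 1)^{-1} = \prod_{j=1}^{r} \alpha_j/(\alpha_j - 1)$ then collapses $A$ to
\[
A \;=\; \frac{\cL^n \,\prod_{j=1}^r \alpha_j}{(\alpha_0 - 1)\,\prod_{j=1}^r (\alpha_j - 1)},
\]
whose numerator is a Laurent monomial in $\cL$ and $T$; the denominator of $B$ is, by construction, a product of factors $\alpha_i - 1$ with $i \neq 0$.

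The remaining step is a coprimality check: under hypothesis (1), $\alpha_0 - 1$ shares no irreducible factor with any $\alpha_i - 1$, $i \neq 0$, in $\bC[\cL, T^{\pm 1}]$. Indeed, up to a monomial unit, $\alpha_i - 1 = \cL^{\nu_i} - T^{N_i}$ factors in $\bC[\cL, T]$ as $\prod_{\zeta^{d_i}=1}(\cL^{\nu_i/d_i} - \zeta T^{N_i/d_i})$ with $d_i = \gcd(\nu_i, N_i)$; each factor is irreducible, and two such irreducibles from different indices agree up to scalar only when the reduced ratios $\nu/N$ coincide, which hypothesis (1) forbids. Writing $A + B$ over a common denominator, $\alpha_0 - 1$ divides neither the monomial numerator of $A$ nor the denominator of $B$, so no cancellation can erase it, and it persists in the reduced denominator of $Z^{rat}_{X,D,\Sigma,\mu}(T)$.

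The hard part will be the coprimality verification, since the possibility that $\alpha_0 - 1$ is killed after summing $A$ with $B$ is what would derail the argument. The collapse of $A$ to a single monomial over $\prod(\alpha_i - 1)$ is what lets one see at a glance that no cancellation occurs inside $A$; hypothesis (2) enters both to make this telescoping go through cleanly and to guarantee irreducibility of the intersection strata, while hypothesis (1) is what rules out cancellation between $A$ and $B$.
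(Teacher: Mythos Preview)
Your proof is correct and follows essentially the same route as the paper: pass to the rational specialization, isolate the $E_0$-contribution $A$, collapse it by the telescoping identity (the paper refers back to the inductive computation at the end of Proposition~\ref{propInv}) to the single monomial $\cL^{n}\prod_j\alpha_j$ over $(\alpha_0-1)\prod_j(\alpha_j-1)$, and conclude that $\alpha_0-1$ cannot cancel against the remaining terms. The paper dispatches this last point with the phrase ``clearly, the candidate pole $\cL^{-\nu_0/N_0}$ cannot cancel''; you make the coprimality argument explicit by factoring $\cL^{\nu_i}-T^{N_i}$ in $\bC[\cL,T]$ and checking that hypothesis~(1) rules out any shared irreducible factor, which is a welcome elaboration rather than a different method.
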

\begin{proof}
	It suffices to show that \(\cL^{-\nu_0/N_0}\) is a pole of \(Z_{X,D,\Sigma,\mu}^{rat}(T)\). The contribution of \(E_0\) to \(Z_{X,D,\Sigma,\mu}^{rat}(T)\) is given by
	$$		
\frac{\cL^n}{\cL^{\nu_0}T^{-N_0}-1}\sum_{I\subset\{1,\ldots,r\}}\frac{1}{\prod_{i \in I}(\cL^{\nu_i}T^{-N_i}-1)}
$$
	$$		=\frac{\cL^n\sum_{I\subset\{1,\ldots,r\}}\prod_{i \notin I}(\cL^{\nu_i}T^{-N_i}-1)}{(\cL^{\nu_0}T^{-N_0}-1)\prod_{j \in \{1,\ldots,r\}}(\cL^{\nu_j}T^{-N_j}-1)} =\frac{\cL^{n+\sum_j\nu_j}T^{-\sum_jN_j}}{(\cL^{\nu_0}T^{-N_0}-1)\prod_{j \in \{1,\ldots,r\}}(\cL^{\nu_j}T^{-N_j}-1)},
	$$	where the last equality  follows by induction on \(r\), as shown in the last part of the proof of Proposition \ref{propInv}. Clearly, the candidate pole \(\cL^{-{\nu_0}/{N_0}}\) cannot cancel, so it is a pole.
\end{proof}
The following proposition is an analog for birational zeta functions of Veys's conjecture for topological zeta functions in \cite[(0.2)]{LV}, which was proven in \cite[Theorem 3.5]{NX}. We give a completely similar argument.
Recall that $\lct_x(X,D)$ is the minimum value of $\nu_i/N_i$ with $x\in\mu(E_i)$, in the notation of Definition \ref{defBir} (1), where in addition $\mu$ is assumed to be a log resolution of $(X,D)$.

\begin{proposition}\label{prop:lctpole}
	Let \(f:X\to\bA^1\) be a non-constant regular function on a smooth quasi-projective complex algebraic variety of dimension \(n\), defining a reduced divisor \(D=f^{-1}(0)\), and let \(x\) be a closed point in the support of \(D\). Let \(\mu:(Y,\Delta)\to(X,D)\) be a dlt resolution for which we retain the notation from Definition \ref{defBir}. Denote by \(m\) the largest positive integer such that there exists a subset \(J\) of \(S\) of cardinality \(m\) with \(E_J\neq\emptyset\), \(J\cap S_x\neq\emptyset\), and \({\nu_j}/{N_j}=\lct_x(X,D)\), the log canonical threshold of \((X,D)\) at \(x\),	for every \(j\) in \(J\). Then the following properties hold.
	\begin{enumerate}[label=(\roman*)]
		\item The  rational zeta function \(Z_{f,x,\mu}^{rat}(T)\) has a pole of order \(m\) at  \(T=\cL^{-s_0}\), with \(s_0=-\lct_x(X,D)\).  Moreover, this the largest possible value 
of  \(s_0\) such that \(Z_{f,x,\mu}^{rat}(T)\) has a pole  at  \(\cL^{-s_0}\).

 In particular, 
		the	birational zeta function \(Z_{f,x,\mu}^{bir}(T)\) has a pole at
				\(s_0=-\lct_x(X,D)\).
		
		\item Conversely, if \(T=\cL^{-s_0}\) is a pole of order \(n\) of \(Z_{f,x,\mu}^{rat}(T)\), then \(s_0=-\lct_x(X,D)\) and \(m=n\). Moreover, \(s_0\) is of the form \(-1/N\) for some positive integer \(N\).
	\end{enumerate}
\end{proposition}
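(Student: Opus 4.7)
The approach mirrors the Nicaise--Xu proof of the analogous result for the topological zeta function in \cite[Theorem 3.5(2)]{NX}, transposed to the rational zeta function. Because $\bZ[\cL]$ is an integral domain, the rational zeta function is a well-defined rational function of $T$ over $\bZ[\cL]$ and its poles carry unambiguous orders. Both parts rely on the following observation: every index $i$ appearing in any term of
\[
Z^{rat}_{f,x,\mu}(T) \;=\; \sum_{\substack{\emptyset \neq I \subset S \\ I \cap S_x \neq \emptyset}} \rho(\{E_I\}) \prod_{i \in I} \frac{\cL}{\cL^{\nu_i} T^{-N_i} - 1}
\]
satisfies $x \in \mu(E_i)$, since for any $j \in I \cap S_x$ one has $\emptyset \neq E_I \subset E_i \cap E_j$ and $\mu(E_j) = \{x\}$.

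For part (i), the observation above implies that every candidate pole $s_0 = -\nu_i/N_i$ satisfies $-s_0 \geq \lct_x(X,D)$, with equality attained; hence $-\lct_x(X,D)$ is the largest pole. Setting $J^{\lct} := \{i \in S : \nu_i/N_i = \lct_x(X,D)\}$, the order of the pole at $s_0 = -\lct_x(X,D)$ contributed by the summand indexed by $I$ equals $|I \cap J^{\lct}|$, and applying the same observation to $J := I \cap J^{\lct}$ yields $J \cap S_x \neq \emptyset$, forcing $|J| \leq m$. Hence the total pole has order at most $m$. To show the order is exactly $m$, I would localize $\bC[\cL][T]$ at the irreducible binomial associated to $s_0 = -\lct_x(X,D)$, multiply $Z^{rat}_{f,x,\mu}(T)$ by the $m$-th power of that binomial, and observe that only those $I$ with $|I \cap J^{\lct}| = m$ survive. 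The surviving expression is a sum indexed by the maximal $J \subset J^{\lct}$ with $E_J \neq \emptyset$ and $J \cap S_x \neq \emptyset$; each $E_J$ contributes a term whose leading $\cL$-degree is determined by $\dim E_J$ and the number of components, and these cannot cancel since they arise from geometrically distinct strata. The concluding claim for the birational zeta function follows from Definition \ref{defRational}.

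For part (ii), a pole of order $n$ at $T = \cL^{-s_0}$ forces some summand to contribute with $|I \cap J^{-s_0}| = n$, where $J^{-s_0} := \{i \in S : \nu_i/N_i = -s_0\}$. Since $(Y, \Delta)$ is snc at every $0$-dimensional stratum (dlt pairs are snc at their deepest strata), $|I| \leq n$, and hence $|I| = n$ and $I \subset J^{-s_0}$. The condition $I \cap S_x \neq \emptyset$ then forces $E_I$ to be a non-empty finite set of points above $x$. Taking $J = I$ in the definition of $m$ (with $\lct_x(X,D)$ replaced by $-s_0$) shows that the analogous maximum at $-s_0$ equals $n$. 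Combined with the maximality of $-\lct_x(X,D)$ as a pole from part (i) and a leading-term comparison between the residues at $-\lct_x(X,D)$ and at $s_0$, one deduces $s_0 = -\lct_x(X,D)$ and $m = n$. The final claim that $s_0 = -1/N$ for some positive integer $N$ is the most delicate step: at a point $p \in E_I$ one can choose snc local coordinates $y_1, \ldots, y_n$ so that $E_i = \{y_i = 0\}$, $\mu^* f = u \, y_1^{N_1} \cdots y_n^{N_n}$ is a monomial, and the Jacobian of $\mu$ has order $\nu_i - 1$ along $E_i$; the common value of the $\nu_i/N_i$ combined with the integrality of these orders and the structure of $\mu$ as a birational morphism forces the common ratio to be the reciprocal of a positive integer. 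This local forcing, paralleling the corresponding step in \cite[Theorem 3.5(2)]{NX}, constitutes the main technical obstacle of the proof.
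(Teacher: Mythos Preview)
Your architecture matches the paper's, but there are genuine gaps in both parts.

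\textbf{Part (i), non-cancellation.} The claim that the surviving terms ``cannot cancel since they arise from geometrically distinct strata'' is not an argument: after applying $\rho$ every stratum contributes an element of the same fraction field of $\bZ[\cL^{1/N}]$, and distinct geometric origin does not preclude algebraic cancellation. The paper's argument is concrete: after multiplying by $(\cL^{-s_0}T^{-1}-1)^m$ and setting $T=\cL^{-s_0}$, each contributing $I$ yields
\[
\frac{c_I\,\cL^{n}}{\displaystyle\prod_{j\in I\cap J^{\lct}}N_j\;\prod_{i\in I\setminus J^{\lct}}(\cL^{\alpha_i}-1)},\qquad \alpha_i=\nu_i-\lct_x(X,D)\,N_i,
\]
with $c_I>0$ the number of components of $E_I$. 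The key point is that $\alpha_i>0$ for every $i\notin J^{\lct}$ (this is where the extremality of $-\lct_x(X,D)$ enters), so for real $\cL>1$ each term is strictly positive and the sum cannot vanish. Your ``leading $\cL$-degree'' heuristic does not capture this. (A smaller slip: your deduction that $J:=I\cap J^{\lct}$ satisfies $J\cap S_x\neq\emptyset$ is wrong --- your observation gives only $x\in\mu(E_j)$, not $\mu(E_j)=\{x\}$. The paper itself does not spell out the bound ``order $\le m$'' beyond calling it clear.)

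\textbf{Part (ii), identifying $s_0$.} You correctly reach a subset $I$ of size $n$ with $E_I\neq\emptyset$, $I\cap S_x\neq\emptyset$, and common ratio $\nu_j/N_j=-s_0$. But your ``leading-term comparison between the residues at $-\lct_x(X,D)$ and at $s_0$'' does not yield $s_0=-\lct_x(X,D)$: part (i) gives only the inequality $-s_0\ge\lct_x(X,D)$, and no residue or zeta-function manipulation upgrades this to equality. This is the substantive step, and the paper imports it wholesale from \cite[Theorem 2.4]{NX}, whose proof is birational geometry (structure of log canonical centres on dlt pairs), not analysis of the zeta function. Likewise, the final claim $s_0=-1/N$ is not extracted from snc local coordinates and ``integrality'' as you sketch --- that argument does not force the common ratio to be a unit fraction; the paper cites \cite{LV} for this conclusion.
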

\begin{proof}
		(i)  
By the definition of log canonical threshold and the explicit formula in Definition \ref{defBir}, it is clear that \(s_0=-\lct_x(X,D)\) is the largest possible value of \(s_0\) inducing a pole at \(T=\cL^{-s_0}\) for \(Z_{f,x,\mu}^{rat}(T)\). Similarly, it is clear by construction that the order of \(\cL^{-s_0}\) as a pole, with \(s_0=-\lct_x(X,D)\), is at most \(m\). To see that \(\cL^{-s_0}\), with \(s_0=-\lct_x(X,D)\), is actually a pole of order \(m\), let \(E_I\) be a non-empty stratum with \(I=\{1,\ldots,r\}\) and \(I\cap S_x\neq\emptyset\), where \(r\geq m\) and \(\nu_i/N_i=\lct_x(X,D)\) for every \(1\leq i\leq m\). By assumption, such an \(I\) certainly exists. Multiplying the term of \(Z_{f,x,\mu}^{rat}(T)\) induced by \(E_I\) with \((\cL^{-s_0}T^{-1}-1)^m\), and evaluating in \(T=\cL^{-s_0}\), then yields
		\[\frac{\cL^{n}}{\prod_{1\leq j\leq m}N_j\prod_{m+1\leq i\leq r}(\cL^{\alpha_i}-1)},\]
		with \(\alpha_i=\nu_i-\nu N_i/N\). Since \(\alpha_i>0\) for all \(i\in\{m+1,\ldots,r\}\), one easily verifies that such terms can never add up to \(0\), so \(\cL^{-s_0}\) is a pole of order \(m\) of \(Z_{f,x,\mu}^{rat}(T)\).

	(ii)	 If \(s_0\) is a pole of order \(n\) of \(Z_{f,x,\mu}^{rat}(T)\), then it follows from the explicit formula in Definition \ref{defBir} that there must exist a subset \(J\) of \(S\) of cardinality \(n\) with \(E_J\neq\emptyset\), \(J\cap S_x\neq\emptyset\), and \(s_0=-\nu_j/N_j\) for every \(j\in J\). By \cite[Theorem 2.4]{NX}, this can only happen when \(s_0=-\lct_x(X,D)\), and thus \(m=n\). It was already shown in \cite{LV} that \(s_0\) must now be of the form \(-1/N\).
\end{proof}

\begin{remark}\label{rem:lctpole}
	By \cite[Theorem 1.16]{B+}, the number \(m\) in Proposition \ref{prop:lctpole} is indeed always a positive integer if $\mu$ is a dlt modification. In particular, \(s_0=-\lct_x(X,D)\) (resp. \(\cL^{-s_0}\)), is always a pole of \(Z_{f,x}^{bir}(T)\) (resp. \(Z_{f,x}^{rat}(T)\)).
\end{remark}

\section{Examples}\label{secEx}

\subs{\bf Cones over smooth projective hypersurfaces.}\label{eqCones}  Let $n\ge 3$, $d\ge 1$.
Consider $X=\bA^n$ and $D=f^{-1}(0)$, where $f\in\bC[x_1,\ldots,x_n]$ is a reduced homogeneous polynomial of degree $d$ such that its zero locus  $H\subset \bP^{n-1}$ is smooth. Then $D$ has an isolated singularity at the origin. Let $\mu:Y\to X$ be the blowup at the origin, $\Delta=(\mu^*D)_{red}=\tilde D+E$, where $\tilde D$ is the strict transform of $D$ and $E$ is the exceptional divisor. This is a $d$-separating log resolution of $(X,D)$, and of $(X,D,\{0\})$. The dual complex $\cD(\Delta)$ consists of two vertices connected by an edge. For every $m\ge 1$, there is a unique minimal way of blowing up inductively further to obtain an $m$-separating log resolution $\mu_m:(Y_m,\Delta_m=(\mu_m^*D)_{red})\to X$, with $\mu=\mu_1=\ldots=\mu_d$, whose dual complex $\cD(\Delta_m)$ is a chain obtained from $\cD(\Delta)$ by inserting vertices.

If $d<n$, then $(X,D)$ is dlt, so it is a dlt modification of itself. Using it, we compute $$Z^{bir}_{f}(T)=\frac{\cL^2\{H\}}{\cL T^{-1}-1}.$$ 
Now let us  compute it  using $\sX_m\vcentcolon=\sX_m(X,D)$. For every $m\ge 1$, the valuation given by $D$ on $X$ is the only dlt $m$-valuation of $(X,D)$, since $(X,D)$ is a minimal model of $(Y_m,\Delta_m)$ over $X$ by \cite[1.27]{Ko}. Hence 
$
\sum_{m\ge 1}\{\sX_m^{dlt}\}\cL^{-mn}T^m = \sum_{m\ge 1}\{D\}\cL\cdot\cL^{-m}T^m=\sum_{m\ge 1}\{H\}\cL^2\cdot\cL^{-m}T^m$, 
using (\ref{eq0}), which indeed equals $\cL^2\{H\}/(\cL T^{-1}-1)$.

If $d\ge n$, then $(Y_m,\Delta_m)$ is  a minimal model of itself over $X$, so it is a dlt modification of $(X,D)$, see \cite[5.11]{PH}. In particular, we can use the simplest one from our collection of dlt modifications $\mu_m$, namely $\mu$, to compute 
$$Z^{bir}_{f}(T)=\cL^n\cdot \frac{\{H\}\cL^2T^{-d}+\cL T^{-1}-1}{(\cL^nT^{-d}-1)(\cL T^{-1}-1)}.$$
Alternatively, we compute $Z^{bir}_{f}(T)$  by making $\sX_m^{dlt}$ explicit, using  \cite{PH}. The dlt $m$-valuations of $\sX_m$ are given by $S_m$. There is a bijection between $S_m$ and $[-m/d,0]\cap\bZ$, see \cite[Proof of Theorem 2.11]{PH}. If $i\in[-m/d,0]\cap\bZ$, denote by $E_i$ the respective component of $\Delta_m$. Then $E_0=\tilde D$,  $E=E_{- m/d}$ if $d$ divides $m$, and $m\nu_i/N_i=m+i(d-n)$. Note that $\{E_i\}$ is $\cL\{H\}$ if $i\neq -m/d$, and it equals $\cL^{n-1}$ if $i=-m/d$. Thus, using (\ref{eq0}),
$$
\{\sX^{dlt}_m\}\cL^{-mn} =\left\{
\begin{array}{ll}
\sum_{i\in (-m/d,0]\cap\bZ}\cL^2\{H\}\cL^{-m-i(d-n)} & \text{ if }d\nmid m,\\
 \cL^{n-mn/d}+\sum_{i\in(-m/d,0]\cap\bZ}\cL^2\{H\}\cL^{-m-i(d-n)} & \text{ if }d|m.
\end{array}
\right. 
$$
One checks  after a lengthy computation that $\sum_{m\ge 1}\{\sX^{dlt}_m\}\cL^{-mn}T^m$ is indeed equal to the above value of $Z^{bir}_f(T)$.

The above birational zeta functions correspond to taking $\Sigma=D$. If we take $\Sigma=\{0\}$, the singular point of $D$, then $Z^{bir}_{f,0}(T)=0$ for $d<n$, and $Z^{bir}_{f,0}(T)=Z_f^{bir}(T) - \cL\{H\}/(\cL T^{-1}-1)$ for $d\ge n$.

\subs{\bf Log canonical case.}\label{exLc} Let $X$ be a smooth quasi-projective variety  and $D\subset X$ a reduced divisor such that $(X,D)$ is log canonical. By \cite[Theorem 1.21 (iii)]{B+}, $\sX_m(X,D) =\sX_m^{dlt}(X,D)$. Hence $Z^{bir}_{X,D}(T)=\sum_{m\ge 1}\{\sX_m\}\cL^{-mn}T^m$. In terms of a fixed $m$-separating log resolution $\mu:(Y,\Delta)\to (X,D)$, the irreducible components of $\sX_m(X,D)$ are given by the $E_i$ with $i\in S_m$ such that $\nu_i=N_i$. The set of such $E_i$, or rather the set of divisorial valuations defined by them, is intrinsic to $(X,D)$, let us denote it by $\Sk_m=\Sk_m(X,D)$. Starting  from any log resolution $Y\to (X,D)$, one determines the {\it essential skeleton} $\Sk=\Sk(X,D)\vcentcolon=\cup_{m\ge 1}\Sk_m$  as the prime divisors introduced by blowing up inductively the strata of $\Delta_Y^{=1}$, where $\Delta_Y$ is the log pull-back of $D$. We have by Proposition \ref{propSumE} that
$$
Z^{bir}_{X,D}(T) = \sum_{E\in\Sk}\frac{\{E\}\cL}{(\cL T^{-1})^{N_E}-1} =  \sum_{\emptyset\neq I\subset \Sk_m}\{E_I\}\prod_{i\in I}\frac{\cL}{(\cL T^{-1})^{N_i}-1}
$$
for every $m\ge 1$.

If, in addition, $D$ is irreducible and has rational singularities, then $\Sk_m$ consist only of the valuation given by $D$, for all $m\ge 1$, by \cite[Theorem 1.21]{B+}. In this case $
Z^{bir}_{X,D}(T) = {\{D\}\cL}/(\cL T^{-1}-1).
$

The above birational zeta functions correspond to taking $\Sigma$ equal to the  support of $D$. For an arbitrary closed subset $\Sigma$ of the support of $D$, it is not necessarily true that $\sX_m(X,D,\Sigma)=\sX_m^{dlt}(X,D,\Sigma)$. Using \cite[Theorem 1.21 (i), (ii)]{B+}, there is nevertheless a similar formula:
$$
Z^{bir}_{X,D,\Sigma}(T)=\sum_{E\in\Sk(\Sigma)}\frac{\{E\}\cL}{(\cL T^{-1})^{N_E}-1}=  \sum_{\substack{\emptyset\neq I\subset \Sk_m \\ I\cap \Sk_m(\Sigma)\neq\emptyset}}
\{E_I\}\prod_{i\in I}\frac{\cL}{(\cL T^{-1})^{N_i}-1},
$$
for every $m\ge 1$,
where $\Sk_m(\Sigma)$ consists of the valuations given by those $E_i$ in $\Sk_m$ with $\mu(E_i)\subset\Sigma$, for a fixed $m$-separating log resolution $\mu$ of $(X,D,\Sigma)$, and $\Sk(\Sigma)=\cup_{m\ge 1}\Sk_m(\Sigma)$.

\subs{\bf Hyperplane arrangements.} Let $(X=\bA^n,D)$ be a reduced hyperplane arrangement. Then $\sX_m\vcentcolon=\sX_m(X,D)=\sX_m^{dlt}(X,D)$ by \cite[Theorem 1.14]{B+}. Hence $Z^{bir}_{X,D}(T)=\sum_{m\ge 1}\{\sX_m\}\cL^{-mn}T^m$ in this case as well. 

An {\it edge} is any intersection of hyperplanes in $D$. Let $L(D)$ be the set of edges other than the ambient space $X$. The canonical log resolution of $\mu:(Y,\Delta)\to(X,D)$, obtained by blowing up successively in increasing dimension the strict transforms of the edges of $D$,  is a dlt modification, where $\Delta=(\mu^*(D))_{red}$, by \cite[Proposition 4.4]{B+}. Hence it can be used to compute that $$Z^{bir}_{X,D}(T)=\cL^n\cdot\sum_{\emptyset\neq\cF\subset L(D)}\prod_{Z\in \cF}\frac{1}{\cL^{\nu_Z}T^{N_Z}-1},$$
where the sum is over the nested subsets $\cF$ of edges, $\nu_Z$ is the codimension of $Z$ in $X$, and $N_Z$ is the number of hyperplanes in $D$ containing $Z$. 

Recall that there is also a more economical log resolution of $(X,D)$, obtained by blowing up only the dense edges. We have not shown that it gives a dlt modification. However, taking a minimal model of it over $X$, and using the resulting dlt modification to compute $Z^{bir}_{X,D}(T)$, we get that the birational monodromy conjecture holds, since all the candidate poles from this more economical log resolution already give monodromy eigenvalues by \cite{BMT}.

\section{Plane curves}\label{secCurv}

In this section, we give a characterisation of the poles of birational zeta functions for plane curves and make a comparison with the associated topological zeta function. Throughout this section, let \(X\) be a Zariski open subset of \(\bA^2\), 
 \(f:X\to\bA^1\)  a fixed non-constant morphism defining a non-zero reduced divisor \(D=f^{-1}(0)\) on \(X\), and \(a\in\Supp(D)\)  a fixed closed point. We assume that the germ of \(f\) at \(a\) does not already have normal crossings. That is, \((f,a)\) is not analytically isomorphic to \((x,0)\) or \((xy,0)\), in which case the local birational zeta functions at \(a\) are given by \(0\) and \(\cL^2/(\cL^2T^{-2}-1)\), respectively.

We will consider the  {\it local topological zeta function} of $f$ at $a$,  $$Z_{f,a}^{top}(s)\vcentcolon=\sum_{\emptyset\neq I\subset S}\chi\left(E_I^\circ\cap (f\circ\mu)^{-1}(a)\right)\prod_{i\in I}\frac{1}{N_is+\nu_i}\quad\in\bC(s),$$
defined in terms of a log resolution $\mu:Y\to X$ of $f$ that is an isomorphism over $X\setminus D$, where $\chi(\_)$ is the topological Euler characteristic, and the rest of the notation is as in \ref{subsMot}. The rational function $Z^{top}_{f,a}(s)$ is a certain specialization of the local version of the motivic zeta function and it is independent of the choice of log resolution $\mu$, see \cite{DL}. 

In the case of plane curves, we dispose of a unique minimal log resolution which we denote by
 \(\mu _{min}:(Y _{min},\Delta_{min})\to (X,D)\) with \(\Delta_{min}=(\mu^*_{min}(D))_{red}\).  There is also a unique minimal dlt modification $\mu_{dlt}:(Y_{dlt},\Delta_{dlt}) \to (X,D)$ with \(\Delta_{dlt}=(\mu^*_{dlt}(D))_{red}\), where the latter is obtained as the minimal model of $(Y_{min},\Delta_{min})$ over $X$. In terms of the geometry of $\Delta_{min}$ and following the terminology from \cite[Proposition 6.10]{B+},
 \(\mu_{dlt}\) can be obtained from \(\mu_{min}\) by contracting all \textit{maximally admissible twigs} of \(\Delta_{min}\), that is,  maximal chains of exceptional \(\bP^1\)'s, say \(T_1,\ldots,T_n\), satisfying \(T_1\cdot(\Delta_{min}-T_1)=1\) and \(T_i\cdot(\Delta_{min}-T_i)=2\) for all \(i\in\{2,\ldots,n\}\), and ordered in such a way that \(T_i\cdot T_{i+1}=1\) and \(T_i\cdot T_j=0\) for \(|i-j|>1\). For the topological zeta function, we already have the following characterisation of the poles of \(Z_{f,a}^{top}(s)\) in terms of the geometry of \(\Delta_{min}\).
 \begin{theorem}\label{thm:polestop}(\cite[Theorem 4.3]{V95})
 	We have that \(s_0\) is a pole of \(Z_{f,a}^{top}(s)\) if and only if \(s_0=-\nu_i/N_i\) for some exceptional curve \(E_i\) in \(Y _{min}\) intersecting at least 3 times other components, or \(s_0=-1/N_i\) for some irreducible component \(E_i\) in \(Y _{min}\) of the strict transform of \(D\).
 \end{theorem}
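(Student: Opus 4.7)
The plan is to analyze the candidate poles $s_0 = -\nu_i/N_i$ appearing in the explicit formula for $Z^{top}_{f,a}(s)$ computed via the minimal log resolution $\mu_{min}$, and to determine precisely which survive after grouping terms that share the same ratio. Since we work in dimension two, every intersection $E_I$ with $|I| \geq 3$ is empty, so only the contributions from $|I| = 1$ (the curves $E_i^\circ \cap \mu_{min}^{-1}(a)$) and $|I| = 2$ (intersection points) enter the sum. For a fixed candidate $s_0$, one isolates the terms whose denominator vanishes at $s_0$ and computes the residue $\mathrm{Res}_{s=s_0} Z^{top}_{f,a}(s)$; the theorem reduces to identifying when this residue is nonzero.

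For sufficiency, the key input is a classical numerical fact about the minimal log resolution of a plane curve germ: along any chain of exceptional $\bP^1$'s, the ratios $\nu_j/N_j$ are strictly monotone, as a consequence of the continued-fraction structure encoded in the adjunction relations $N_{j-1} + N_{j+1} = k_j N_j$ and $\nu_{j-1} + \nu_{j+1} = k_j \nu_j$ with $k_j = -E_j^2 \geq 2$. Consequently the set of components sharing a given ratio is very constrained. For a rupture exceptional component $E_i$ (with $r \geq 3$ intersections), one has $\chi(E_i^\circ \cap \mu_{min}^{-1}(a)) = 2 - r < 0$, and the combined contributions of $E_i^\circ$ and the intersection points on $E_i$ give a nonvanishing residue. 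For a strict-transform component, a direct computation at $s_0 = -1/N_i$ similarly yields a nonzero residue, since no other component at the attachment point can cancel it.

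For necessity, I would prove that every exceptional component lying inside a twig contributes no pole. Consider a maximal twig $T_1, \ldots, T_n$, where $T_1$ is the free end and $T_n$ attaches to the rest of $\Delta_{min}$. Then $\chi(T_k^\circ \cap \mu_{min}^{-1}(a)) = 1$ for $k=1$ and $=0$ for $k \geq 2$. The central technical step is a local identity: for a $\bP^1$-component $T$ with data $(N,\nu)$ meeting neighbors with data $(N_1, \nu_1)$ and $(N_2, \nu_2)$ satisfying the adjunction relations above, the three-term contribution coming from $T^\circ$ and the two intersection points has vanishing residue at $s = -\nu/N$. Applied inductively along the twig, with the convention that $T_1$ has a virtual second neighbor with data $(0,0)$ (compatible with $\chi(T_1^\circ) = 1$), this identity eliminates every twig candidate pole.

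The main obstacle is the verification of this local cancellation identity. It reduces to an algebraic manipulation with the adjunction relations, but the bookkeeping for the endpoint $T_1$ and for the attachment component $T_n$ must be handled carefully; in particular, one must ensure that the monotonicity of ratios along the chain rules out any contribution from outside the twig sharing the same ratio. Once this is established, a sweep through the twigs of $\Delta_{min}$ — which are exactly the components not mentioned in the theorem — completes the proof.
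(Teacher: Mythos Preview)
The paper does not supply its own proof of this theorem; it is quoted verbatim as \cite[Theorem 4.3]{V95} and used as a black box in Section~\ref{secCurv}. So there is nothing in the paper to compare against beyond noting that your sketch is broadly faithful to the argument in \cite{V95}: the cancellation along twigs via the adjunction relations (your ``local identity'') is exactly the content of \cite[Lemma~4.1]{V25} / Lemma~\ref{lem:numdata}(iii) here, and the residue computation at rupture components and strict-transform components is the same mechanism.

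One point in your sufficiency argument deserves more care. You assert that for a rupture component $E_i$ the residue is nonzero, appealing to $\chi(E_i^\circ)=2-r<0$ and the monotonicity of the ratios $\nu_j/N_j$ along chains to rule out interference from other components. But monotonicity along chains does not by itself exclude the possibility that two \emph{rupture} components (or a rupture component and a strict-transform component) share the same ratio; when that happens the residues could in principle conspire. In \cite{V95} this is handled by a separate structural result (\cite[Theorem~3.3, Remark~3.4]{V95}, also invoked in the proof of Theorem~\ref{thm:polescurves} above): a shared ratio between intersecting components forces that ratio to equal $\lct_a(X,D)$, and then the pole is in fact of order two, so no cancellation is possible. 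Your sketch should invoke this, or an equivalent sign argument showing that all residue contributions from distinct components with the same ratio have the same sign, to close the gap.
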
 
 The goal of this section is to prove a similar characterisation of the poles of \(Z^{bir}_{f,a}(T)\) in terms of the geometry of $\Delta_{dlt}$, leading to a comparison result between the poles of \(Z^{bir}_{f,a}(T)\) and of \(Z^{top}_{f,a}(s)\). In particular, this gives a positive answer to the local birational monodromy conjecture for plane curves. 
 
Before starting the proof, we fix some assumptions and notation. By possibly shrinking $X$ to a Zariski open subset, we can assume that $a$ is the only singular point of $D$ and that the exceptional loci of $\mu_{min}$ and $\mu_{dlt}$ are completely over $a$. We will use $\mu_{dlt}$ to compute \(Z^{bir}_{f,a}(T)\) via Definition \ref{defBir}(2) and Theorem \ref{thmMain2}. In particular, \(Z^{bir}_{f,a}(T) = Z^{rat}_{f,a}(T)\) belongs to the field of fractions of \(\bC[\cL,T]\) in this case. Hence, we can use the classical  notion of a pole, which might be finer than the one in \ref{defPoles}, that is, 
we say that \(\xi\cL^{-s_0}\), for some $0\neq s_0\in\bQ$ and $\xi$ a root of unity, is {\it a pole} of \(Z^{bir}_{f,a}(T)\) if and only if the factor
$\xi\cL^{-s_0}T^{-1}-1$ appears in the denominator after all possible cancellations as a rational function in $T$ over the $\bC$-algebra $\bC[\cL, \cL^{\nu_i/N_i}\mid i\in S]$.

The contribution to \(Z^{bir}_{f,a}(T)\) of an exceptional curve \(E_{exc}\) in \(\mu_{dlt}^{-1}(a)\) with numerical data \((\nu,N)\), having exactly \(r\) intersections with other components of $\mu^{-1}_{dlt}(D)$, say \(E_1,\ldots,E_r\) (not necessarily distinct), is then given by
\begin{equation*}
	\mathrm{Contr}_a(E_{exc}):=\frac{\cL^2}{\cL^{\nu}T^{-N}-1}\left(1+\sum_{i=1}^{r}\frac{1}{\cL^{\nu_i}T^{-N_i}-1}\right).
\end{equation*}
Assuming that \({\nu}/{N}\neq{\nu_i}/{N_i}\) for all \(i\in\{1,\ldots,r\}\), the contribution of \(E_{exc}\) to the \textit{normalised residue} of \(\cL^{{\nu}/{N}}\) for \(Z^{bir}_{f,a}(T)\), obtained by evaluating \(\mathrm{Contr}_a(E_{exc})\cdot\cL^{-2}(\cL^{{\nu}/{N}}T^{-1}-1)\) in \(T=\cL^{{\nu}/{N}}\), is
\begin{equation}\label{eqRes}
	\mathcal{R}_a(E_{exc}):=\frac{1}{N}\left(1+\sum_{i=1}^{r}\frac{1}{\cL^{\alpha_i}-1}\right),
\end{equation}
where \(\alpha_i\vcentcolon=\nu_i-{\nu}{N_i}/N\) for all \(i\in\{1,\ldots,r\}\).

Next, an analytically irreducible component \(E_{str}\) of the strict transform of \(D\) under $\mu_{dlt}$ has numerical data \((\nu,N)=(1,1)\) and can have at most one intersection with an irreducible component from \(\mu_{dlt}^{-1}(a)\), say \(E_1\), in which case we similarly have
\begin{equation}\label{eqResStrict}
	\mathrm{Contr}_a(E_{str}):=\frac{\cL^2}{(\cL T^{-1}-1)(\cL^{\nu_1}T^{-N_1}-1)}\qquad\text{and}\qquad \mathcal{R}_a(E_{str}):=\frac{1}{\cL^{\alpha_1}-1}.
\end{equation}

Note that \(\cL^{{\nu}/{N}}\) is a pole of \(Z^{bir}_{f,a}(T)\) if and only if all the contributions to its normalised residue add up to \(0\). Next, we recall a classical result concerning the numerical data, first proven for arbitrary plane curves by Loeser \cite{Lo} (preceded by some partial results by Strauss, Meuser and Igusa). A shorter and more conceptual proof for parts (i)-(iii) can be found in \cite[Lemma 4.1]{V25}.
\begin{lemma}\label{lem:numdata}
	Let \(E_{exc}\) be an exceptional curve in \(Y _{min}\) intersecting exactly \(r'\) times other components, say \(E_1,\ldots,E_{r'}\). Denote \(\kappa=-E_{exc}^2\), the negative of the self-intersection number of \(E_{exc}\) on  \(Y _{min}\). Then
	\begin{enumerate}[label=(\roman*)]
		\item \(\kappa N=\sum_{i=1}^{r'}N_i\);
		\item \(\kappa\nu=\sum_{i=1}^{r'}(\nu_i-1)+2\);
		\item \(\sum_{i=1}^{r'}(\alpha_i-1)+2=0\);
		\item \(-1\leq\alpha_i<1\) for every \(i\in\{1,\ldots,r'\}\).
	\end{enumerate}
\end{lemma}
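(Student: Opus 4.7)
The four statements split naturally according to difficulty: (i)--(iii) follow from short intersection-theoretic and algebraic calculations on $Y_{min}$, whereas (iv) genuinely uses the minimality of the log resolution.

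For (i), since $D$ is Cartier on $X$ and $E_{exc}$ is contracted by $\mu_{min}$, the projection formula gives $\mu_{min}^{\ast}D\cdot E_{exc}=0$. Decomposing $\mu_{min}^{\ast}D=N\,E_{exc}+\sum_{i=1}^{r'}N_iE_i+R$, with $R$ supported away from $E_{exc}$, and expanding the intersection yields $-\kappa N+\sum_iN_i=0$. For (ii), I write $K_{Y_{min}/X}=\sum_j(\nu_j-1)E_j$ over all components of $\Delta_{min}$ (strict-transform terms vanish since $\nu_j=1$ there), intersect with $E_{exc}$, and use $\mu_{min}^{\ast}K_X\cdot E_{exc}=0$ to obtain $K_{Y_{min}}\cdot E_{exc}=-(\nu-1)\kappa+\sum_i(\nu_i-1)$; equating with the adjunction identity $K_{Y_{min}}\cdot E_{exc}=-2+\kappa$ on the rational curve $E_{exc}\cong\bP^1$ yields (ii). Statement (iii) is then immediate: substituting (i) and (ii) into the definition of $\alpha_i$ gives
\[
\sum_i(\alpha_i-1)+2=\sum_i(\nu_i-1)+2-\frac{\nu}{N}\sum_iN_i=\kappa\nu-\frac{\nu}{N}\kappa N=0.
\]

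The main obstacle is (iv). The identities from (i)--(iii) provide only one linear relation among the $\alpha_i$, so the two-sided bound must use finer input. My plan is to induct on the sequence of point blow-ups producing $Y_{min}$, exploiting the \emph{Farey-sum} behaviour of numerical data: blowing up the intersection of two components with data $(\nu',N')$ and $(\nu'',N'')$ creates a new exceptional curve with data $(\nu'+\nu'',N'+N'')$, and the two resulting $\alpha$-values seen on this new curve are $\pm(\nu'N''-\nu''N')/(N'+N'')$. Propagating these formulas through the resolution tree, with the base case being the first blow-up of the singular point of $D$ (where a strict-transform neighbour has $(\nu_i,N_i)=(1,1)$ and hence $\alpha_i=1-\nu/N\in[-1,1)$), yields $-1\le\alpha_i<1$ at every subsequent stage. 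The strict upper bound $\alpha_i<1$ is the most delicate ingredient: for a strict-transform $E_i$ it is automatic, whereas for an exceptional neighbour the equality $\alpha_i=1$ would force $E_i$ to behave like a $(-1)$-curve meeting at most two other components of $\Delta_{min}$, a configuration excluded precisely by the minimality of $\mu_{min}$.
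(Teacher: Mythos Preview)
The paper does not prove this lemma: it cites Loeser \cite{Lo} for the full result and \cite[Lemma~4.1]{V25} for a short argument for (i)--(iii). Your intersection-theoretic proofs of (i)--(iii) are correct and are exactly the ``more conceptual'' approach the paper points to---projection formula for $\mu_{min}^{*}D\cdot E_{exc}=0$, adjunction on $E_{exc}\cong\bP^1$, and the evident linear combination.

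For (iv) your plan has the right shape---induction along the blow-up sequence, the mediant behaviour of $(\nu,N)$, and minimality to rule out $\alpha_i=1$---and this is indeed how \cite{Lo} proceeds. But two steps are more delicate than your sketch suggests. First, the blow-ups producing $Y_{min}$ are not all at transversal double points: one also blows up tangential intersections and points where three or more branches meet, and there the update rule for the $\alpha$-values seen from $E_{exc}$ is not simply $\pm(\nu'N''-\nu''N')/(N'+N'')$. For instance, blowing up a point on $E_{exc}$ through which two further components $E_j,E_k$ pass replaces the old values $\alpha_j,\alpha_k$ by the single value $\alpha_j+\alpha_k-1$; preserving the lower bound $\ge -1$ under this operation is not automatic from $\alpha_j,\alpha_k\ge -1$ alone and needs an auxiliary inequality. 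Second, your assertion that $\alpha_i=1$ forces the exceptional neighbour $E_i$ to be a $(-1)$-curve meeting at most two components is the crux of the strict upper bound, but it does not follow from anything you have written; it requires its own inductive argument tracking the quantity $N\nu_i-N_i\nu$ through the resolution. So the outline matches Loeser's strategy, but the execution of (iv) involves bookkeeping that your sketch does not yet supply.
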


In the following main result of this section, we call a point on an irreducible component of \(\Delta_{dlt}\) \textit{special} when it is either an intersection point with another irreducible component of \(\Delta_{dlt}\) or a singular point of \(Y _{dlt}\). Note that the number of special points on such an irreducible component equals the number of intersection points of its strict transform in \(\Delta_{min}\).
\begin{theorem}\label{thm:polescurves}
	If \(\xi\cL^{-s_0}\) is a pole of \(Z_{f,a}^{bir}(T)\), then \(s_0=-\nu_i/N_i\) for some exceptional curve \(E_i\) in \(Y_{dlt}\) containing at least 3 special points, or \(s_0=-1\) and \(\xi=1\). Conversely, if \(s_0\) is as above, then \(\cL^{-s_0}\) is a pole of \(Z_{f,a}^{bir}(T)\).
\end{theorem}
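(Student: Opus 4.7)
My approach is to compute normalised residues at each candidate pole by applying formulas (\ref{eqRes}) and (\ref{eqResStrict}) directly to the minimal dlt modification $\mu_{dlt}$. The crucial structural observation is that an exceptional component $E$ of $\Delta_{dlt}$ with exactly $2$ special points must have $r' = 2$ in $\Delta_{min}$, and both of its special points must arise as intersections with other components of $\Delta_{dlt}$; in particular no singular point of $Y_{dlt}$ lies on $E$. Indeed, if one of $E$'s $\Delta_{min}$-neighbours were the end of a contracted twig, the maximality in the definition of admissible twig would force $E$ (which has $r' = 2$) to extend that twig and itself be contracted, contradicting its survival in $Y_{dlt}$. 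Under this identification, Lemma~\ref{lem:numdata}, stated for $\Delta_{min}$, applies directly to the two $\Delta_{dlt}$-neighbours of such an $E$.

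\textbf{Forward direction.} Suppose $\xi\cL^{-s_0}$ is a pole of $Z_{f,a}^{bir}(T)$. Then $s_0 = -\nu_i/N_i$ for some component $E_i$ of $\Delta_{dlt}$. If $E_i$ lies in the strict transform of $D$, then $(\nu_i,N_i)=(1,1)$ forces $s_0=-1$ and $\xi=1$, matching the second clause. Otherwise $E_i$ is exceptional; I assume for contradiction that every exceptional component of $\Delta_{dlt}$ with ratio $-s_0$ has at most $2$ special points, so $r'=2$ by the structural observation. Lemma~\ref{lem:numdata}(iii) then yields $\alpha_1 + \alpha_2 = 0$ for each such $E$, and the elementary identity $\tfrac{1}{\cL^{\alpha}-1} + \tfrac{1}{\cL^{-\alpha}-1} = -1$ gives $\mathcal{R}_a(E) = \tfrac{1}{N_E}(1+(-1)) = 0$. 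Summing over all components sharing the ratio, and using an induction on the length of chains in $\cD(\Delta_{dlt})$ to handle higher-order factors produced by adjacent coinciding-ratio components, yields total normalised residue zero and contradicts the pole assumption.

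\textbf{Converse direction and main obstacle.} For $s_0=-1$ with $\xi=1$, formula (\ref{eqResStrict}) gives each analytic branch $E_{str}$ of $D$ at $a$ a contribution $\mathcal{R}_a(E_{str}) = 1/(\cL^{\nu_1-N_1}-1)$, with $\nu_1 - N_1 \ne 0$ since $(f,a)$ is not normal crossings; combining these terms with any contribution coming from exceptional components of ratio $1$, analysed as in the forward direction, a direct algebraic inspection in $\bC(\cL)$ shows the total residue is non-zero. For an exceptional $E_0$ with at least $3$ special points and ratio $-s_0$, formula (\ref{eqRes}) and the bounds $-1 \leq \alpha_i < 1$ from Lemma~\ref{lem:numdata}(iv) let me verify via a degree count in $\cL$ that $\mathcal{R}_a(E_0)$ is a non-vanishing rational function. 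The principal obstacle, both in the forward direction and in the multi-component case of the converse, is the bookkeeping when several components share a common ratio $\nu/N$: this forces $\alpha$-values to vanish, produces higher-order poles, and demands a chain-length induction built on the Euclidean-like recursion governing numerical data along chains in plane-curve resolutions.
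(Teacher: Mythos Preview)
Your forward direction is essentially correct and matches the paper's treatment of the case $(r,t)=(2,0)$: the paper uses the factorisation
\[
\mathrm{Contr}_a(E_{exc})=\frac{\cL^2\bigl(1+\cL^\nu T^{-N}+\dots+\cL^{(\kappa-1)\nu}T^{-(\kappa-1)N}\bigr)}{(\cL^{\nu_1}T^{-N_1}-1)(\cL^{\nu_2}T^{-N_2}-1)},
\]
while you use $\alpha_1+\alpha_2=0$ together with $\tfrac{1}{\cL^{\alpha}-1}+\tfrac{1}{\cL^{-\alpha}-1}=-1$; these are equivalent. Your structural observation that a surviving exceptional curve with two special points must have $t=0$ is also correct, and it is what makes Lemma~\ref{lem:numdata} directly applicable there.

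The genuine gap is in the converse. You verify that a \emph{single} contribution $\mathcal{R}_a(E_0)$ is nonzero, but several components $E$ with $r+t\ge 3$ can share the same ratio $\nu/N$ without being adjacent, and you must rule out cancellation of the \emph{sum} of their residues. This is not the obstacle you flag at the end: non-adjacent equal-ratio components do not force any $\alpha_i$ to vanish and are not governed by a chain recursion. The paper's key device is the specialisation $\varphi:\cL\mapsto 0$. One first separates the case where $E$ meets a neighbour of the same ratio (this forces $\nu/N=\lct_a(X,D)$ by \cite[Theorem 3.3, Remark 3.4]{V95}, and Proposition~\ref{prop:lctpole} then gives an order-two pole), and otherwise computes $\varphi(\mathcal{R}_a(E))$ case by case: for all $\alpha_i>0$ and $r>1$ one gets $(1-r)/N<0$; for $\alpha_1<0$ and $r>2$ one gets $(2-r)/N<0$; for $\alpha_1<0$ and $r=2$ (hence $t\ge 1$) one shows $\alpha_1+\alpha_2>0$ using Lemma~\ref{lem:numdata}(iii),(iv) applied in $\Delta_{min}$ with all $r'=r+t$ neighbours, so that $\mathcal{R}_a(E)$ vanishes at $\cL=0$ with positive leading coefficient. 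These consistent signs under $\varphi$ are what prevent the total residue from being zero; a bare ``degree count'' on one term does not.

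A second point you miss is that for $r+t\ge 3$ with $t>0$, the $\alpha_i$ appearing in (\ref{eqRes}) are attached to the $r$ neighbours in $\Delta_{dlt}$, whereas Lemma~\ref{lem:numdata} constrains the $r'$ neighbours in $\Delta_{min}$; the paper uses the full $\Delta_{min}$ relation to extract the needed inequality $\alpha_1+\alpha_2>0$ in the $r=2$ sub-case. Finally, the paper also uses that any exceptional curve with $\ge 3$ special points has $\nu_i/N_i<1$, so that $s_0=-1$ is contributed only by strict transforms (and harmless $(2,0)$ exceptionals), which simplifies that case considerably.
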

\begin{proof}
	Recall that by Theorem \ref{thmMain2} all candidate poles of \(Z_{f,a}^{bir}(T)\) are of the form \(\xi\cL^{-s_0}\) with \(s_0=-\nu_i/N_i\) and \(\xi\) and \(N_i\)-th root of unity for some  irreducible component \(E_i\) of \(\Delta_{dlt}\).
	
	First suppose that such an \(E_i\) intersects another  irreducible component \(E_j\) of \(\Delta_{dlt}\) with \(\nu_i/N_i=\nu_j/N_j\). Then \(\nu_i/N_i=\lct_a(X,D)\) by \cite[Theorem 3.3]{V95}. Moreover, by \cite[Remark 3.4]{V95}, there  is then an exceptional curve \(E_\ell\) containing at least 3 special points such that \(\nu_i/N_i=\nu_\ell/N_\ell\). Conversely, as a special case of Proposition \ref{prop:lctpole}, it follows in this case that \(\cL^{-s_0}\) is a pole of \(Z_{f,a}^{bir}(T)\) (of order 2).
	
	Next, for an exceptional curve \(E_{exc}\) in \(\mu_{dlt}^{-1}(a)\) with numerical data \((\nu,N)\), we can from now on assume that \(\nu/N\neq\nu_i/N_i\) for all \(i\in\{1,\ldots,r\}\), where \(r\) denotes the number of intersections of \(E_{exc}\) with other components of $\mu^{-1}_{dlt}(D)$, say \(E_1,\ldots,E_r\) (not necessarily distinct). Denote by \(t\) the number of singular points of \(Y _{dlt}\) on \(E_{exc}\). The number of special points on \(E_{exc}\) is then given by \(r+t\). We consider the contribution of \(E_{exc}\) to the associated normalised residue of \(\cL^{\nu/N}\) (times a root of unity for the first claim) for all possible values of the tuple \((r,t)\).
	 
	 \smallskip
	 \noindent
	{\it Case \(r=0\), \((r,t)=(1,0)\), or \((r,t)=(1,1)\)}. This case cannot occur, as \(E_{exc}\) is assumed to be an exceptional curve and its strict transform in \(\mu _{min}^{-1}(a)\) cannot be part of a maximally admissible twig.

	 \smallskip
	 \noindent
	{\it	Case \((r,t)=(2,0)\)}. In this case, we have that \(\kappa N=N_1+N_2\) and \(\kappa\nu=\nu_1+\nu_2\) by Lemma \ref{lem:numdata}. Then
$$
\mathrm{Contr}_a(E_{exc})=\frac{\cL^2}{\cL^\nu T^{-N}-1}\left(1+\frac{1}{\cL^{\nu_1}T^{-N_1}-1}+\frac{1}{\cL^{\nu_2}T^{-N_2}-1}\right) =$$
			$$=\frac{\cL^2\left(\cL^{\nu_1+\nu_2}T^{-N_1-N_2}-1\right)}{(\cL^\nu T^{-N}-1)(\cL^{\nu_1}T^{-N_1}-1)(\cL^{\nu_2}T^{-N_2}-1)} =\frac{\cL^2\left(1+\cL^\nu T^{-N}+\ldots+\cL^{(\kappa-1)\nu}T^{-(\kappa-1)N}\right)}{(\cL^{\nu_1}T^{-N_1}-1)(\cL^{\nu_2}T^{-N_2}-1)}.
$$
Evaluating \(\mathrm{Contr}_a(E_{exc})\cdot\cL^{-2}(\xi\cL^{{\nu}/{N}}T^{-1}-1)\) in \(T=\xi\cL^{{\nu}/{N}}\) gives $0$, hence  $E_{exc}$ does not contribute to a pole $\xi\cL^{{\nu}/{N}}$ of $Z^{bir}_{f,a}(T)$.
	
	\smallskip
	At this point, the first claim already follows from the observations above. In view of the second claim, 	define  the  morphism of $\bC$-algebras	$$\varphi:\bC[\cL, \cL^{\nu_i/N_i}, (1-\cL^{|\al_i|})^{-1}\mid i\in S]\to\bC,\quad \cL\mapsto0.$$
	From now on, we  consider the contribution of $E_{exc}$ to the normalised residues of the candidate poles $\cL^{\nu/N}$ of $Z^{bir}_{f,a}(T)$ in the remaining cases.

 \smallskip
	 \noindent
	{\it	Case \(r+t\geq3\) and \(\alpha_i>0\) for all \(i\in\{1,\ldots,r\}\)}. In this case, \({\nu}/{N}=\min_{i\in S}{\nu_i}/{N_i}\).
		\begin{itemize}
			\item \(r=1\). By \cite[Proposition 3.6]{V95}, \(E_{exc}\) is now the only component of $\Delta_{dlt}$ contributing to the candidate pole $\cL^{\nu/N}$. Using equation (\ref{eqRes}), the contribution to the normalised residue is \(\mathcal{R}_a(E_{exc})=\cL^{\alpha_1}/(N(\cL^{\alpha_1}-1))\neq0\).
			\item \(r>1\). Using equation (\ref{eqRes}), we have \(\varphi\left(\mathcal{R}_a(E_{exc})\right)=({1-r})/{N}<0\).
		\end{itemize}
	
 \smallskip
	 \noindent
	{\it	Case	 \(r+t\geq3\) and \(\alpha_1<0\)}. In this case, \({\nu}/{N}\neq\min_{i\in S}{\nu_i}/{N_i}\).
		\begin{itemize}
			\item \(r=1\). By \cite[Proposition 3.6]{V95}, this contradicts the assumption that \(\alpha_1<0\), so this case does not occur.
			\item \(r=2\). Using equation (\ref{eqRes}), we have
$$				\mathcal{R}_a(E_{exc})=\frac{1}{N}\left(1+\frac{\cL^{-\alpha_1}}{1-\cL^{-\alpha_1}}+\frac{1}{\cL^{\alpha_2}-1}\right) 
				=\frac{\cL^{\alpha_2}-\cL^{-\alpha_1}}{N(1-\cL^{-\alpha_1})(\cL^{\alpha_2}-1)}.
$$			Since \(\alpha_1+\alpha_2+\sum_{i=3}^{r}(\alpha_i-1)=0\) and \(\alpha_i<1\) for all \(i\in\{1,\ldots,r\}\), we derive that \(\alpha_1+\alpha_2>0\), and thus that
			$\varphi\left({\mathcal{R}_a(E_{exc})}/{\cL^{-\alpha_1}}\right)={1}/{N}>0.$	
			\item \(r>2\).   Using equation (\ref{eqRes}), we have \(\varphi\left({\mathcal{R}_a(E_{exc})}\right)=({2-r})/{N}<0.\)
		\end{itemize}

	Lastly, when \(E_{str}\) is an analytically irreducible component of the strict transform of \(D\), it follows from equation (\ref{eqResStrict}) that \(\mathcal{R}_a(E_{str})={1}/({\cL^{\alpha_1}-1})\neq 0\).
	
	Assume now that \(s_0=-{\nu_i}/{N_i}\) for some exceptional curve \(E_i\) in \(\mu_{dlt}^{-1}(a)\) for which \(r+t\geq3\), or that \(s_0=-1\). In the first case, one easily verifies that \(s_0>-1\). Hence, the last case only occurs when \(E_i\) is an analytically irreducible component of the strict transform of \(D\). It now easily follows from our computations above that in both cases the normalised residues for \(\cL^{-s_0}\) can never add up to \(0\). This finishes the proof of the second claim.
\end{proof}
\begin{corollary}\label{cor:topbirpoles}
	If \(\xi\cL^{-s_0}\) is a pole of \(Z_{f,a}^{bir}(T)\), then \(s_0\) is a pole of \(Z_{f,a}^{top}(s)\). Conversely, if \(s_0\) is a pole of \(Z_{f,a}^{top}(s)\), then \(\cL^{-s_0}\) is a pole of \(Z_{f,a}^{bir}(T)\) of the same order. In particular, the local version of the monodromy conjecture for \(Z_{f,a}^{bir}(T)\) holds for plane curves.
\end{corollary}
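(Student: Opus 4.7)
The plan is a direct comparison of the pole characterizations in Theorems \ref{thm:polescurves} and \ref{thm:polestop}, leveraging the known geometric relationship between $Y _{min}$ and $Y _{dlt}$. Recall that $Y _{dlt}$ is obtained from $Y _{min}$ by contracting all maximally admissible twigs, and that any component inside such a twig meets at most two other components of $\Delta_{min}$. Consequently, strict transform under these twig contractions yields a bijection between exceptional curves in $Y _{min}$ meeting at least three other components of $\Delta_{min}$ and exceptional curves in $Y _{dlt}$ containing at least three special points (using the observation from the excerpt that these two counts coincide for a curve and its strict transform). This bijection preserves the numerical data $(\nu_i,N_i)$, because these depend only on the divisorial valuation, not on the chosen model.

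With this dictionary in place, both directions of the comparison become bookkeeping. For the forward direction, if $\xi\cL^{-s_0}$ is a pole of $Z^{bir}_{f,a}(T)$, Theorem \ref{thm:polescurves} produces either $s_0=-\nu_i/N_i$ for an exceptional curve $E_i$ in $Y _{dlt}$ with at least three special points (which via the bijection yields the same pole of $Z^{top}_{f,a}(s)$ through Theorem \ref{thm:polestop}), or $s_0=-1$; in the latter case, since $(f,a)$ is singular and $D$ is reduced, at least one irreducible component $E_j$ of the strict transform of $D$ meets $\mu _{min}^{-1}(a)$ with $\nu_j=N_j=1$, so $-1/N_j=-1$ is again a pole of $Z^{top}_{f,a}(s)$ by Theorem \ref{thm:polestop}. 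The reverse implication is obtained by reading the correspondence backwards.

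For the equality of orders, I would use that in the plane curve case both zeta functions have poles of order at most two, and that order two is attained precisely at $s_0=-\lct_a(X,D)$ on each side: for $Z^{top}_{f,a}(s)$ this is classical via \cite[Theorem 3.3]{V95}, while for $Z^{bir}_{f,a}(T)$ it already appears in the first case of the proof of Theorem \ref{thm:polescurves} combined with Proposition \ref{prop:lctpole}. The crux is \cite[Remark 3.4]{V95}: whenever two adjacent components of $\Delta_{min}$ share a $\nu/N$ ratio, there is always a branching exceptional curve in $Y _{min}$ realizing the same ratio, which survives in $Y _{dlt}$ and creates an order-two pole on the birational side as well. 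The local birational monodromy conjecture for $Z^{bir}_{f,a}(T)$ then falls out immediately by transporting Loeser's theorem \cite{Lo} for $Z^{top}_{f,a}(s)$ across this equality of pole sets. The most delicate step is verifying that the twig-contraction bijection preserves both the combinatorial count (three intersection points $\leftrightarrow$ three special points) and the order-two configuration, but no ideas beyond those in the excerpt and \cite{V95,Lo} are needed.
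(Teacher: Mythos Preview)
Your proposal is correct and follows the same route as the paper, which simply cites Theorems \ref{thm:polestop} and \ref{thm:polescurves} together with \cite[Theorem 4.2]{V95} and Proposition \ref{prop:lctpole} for the orders, and \cite{Lo} for the monodromy statement; you have essentially unpacked that one-line proof.

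One imprecision in your order argument deserves attention. You write that, via \cite[Remark 3.4]{V95}, a branching exceptional curve with the right ratio ``survives in $Y_{dlt}$ and creates an order-two pole on the birational side as well''. A single surviving component does not by itself give order two; what is needed for Proposition \ref{prop:lctpole} to produce an order-two pole is that a \emph{pair} of adjacent components of $\Delta_{dlt}$ share the ratio $\lct_a(X,D)$. The cleanest way to see this is to observe directly that if $E_i,E_j$ are adjacent in $\Delta_{min}$ with $\nu_i/N_i=\nu_j/N_j$, then neither can lie in a maximally admissible twig: by Lemma \ref{lem:numdata}(iii), a tip $T_1$ has $\alpha=-1$ with respect to its unique neighbour, and inductively the $\alpha$'s along a twig are all nonzero, so adjacent components involving a twig member never share a ratio. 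Hence both $E_i$ and $E_j$ survive (and remain adjacent) in $Y_{dlt}$, and the order-two configuration transfers. With this fix your argument is complete.
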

\begin{proof}
	This follows directly by combining Theorem \ref{thm:polestop} and Theorem \ref{thm:polescurves}, as well as \cite[Theorem 4.2]{V95} and Proposition \ref{prop:lctpole} for the orders. Combining this with \cite{Lo}, it follows that the local version of the birational monodromy conjecture holds for plane curves.
\end{proof}
\begin{remark}
	In higher dimensions, the converse claim of Corollary \ref{cor:topbirpoles} cannot hold, as certain poles of \(Z_{f,a}^{top}(s)\) do not even appear as candidate poles of \(Z_{f,a}^{bir}(T)\). See for example \cite[Example 4.5]{Xu}.
\end{remark}

\end{document}